% *********************************************************************
% *********************** READ THIS PLEASE ***************************
% *********************************************************************
% This sample thesis illustrates the 'sfuthesis' LaTeX document style.
% If you have questions or comments concerning this style, send  
% e-mail to tex-help@sfu.ca (Academic Computing Services).

% To use this style you need the file sfuthesis.sty (if you're on Unix
% it's automatically invoked).  Otherwise you can download it to
% use with LaTeX on your computer: it's in the directory
% /usr/local/lib/tex/inputs on ACS Unix (fraser).  Download it as a text file.
% On the Math & Stats Unix system, it's in /usr/local2/lib/tex/inputs.
% Any commands not explained in this sample thesis are TeX or 
% LaTeX commands.  For a list of books on TeX and LaTeX see 
% computing "how-to" handout T-5, "Use TeX Resources." 
%
% For info on how to get and/or run TeX & LaTeX on Unix or other
% platforms, see other "how-to" handouts in the T-series.

%       Warning: Modify sfuthesis.sty at your own risk! 
% *********************************************************************

%\documentstyle[11pt,sfuthesis]{report}
\documentclass[10pt,letterpaper,dvips]{article}

 \usepackage{amssymb,latexsym,amsmath,amsthm}
 %\usepackage[normal]{subfigure}
        % NOTES:
        %

        % 12pt also gives good results. The twoside option also
        % works with sfuthesis now (Apr 96, version 1.1).
        %
        % sfuthesis style is a modification of report.  It sets margins
        % & line spacing & the figure & table numbering style and 
        % disallows page breaks at hyphens.  This will give some 
        % underfull vbox's, so an alternative is to use
        % \brokenpenalty=100 and manually search for and fix
        % such page breaks.
        %
        % To make endnotes instead of footnotes, change the  
        % documentstyle command to read ...[11pt,endnotes,sfuthesis]...
        % and add this line here:
%\let\footnote=\endnote
        % If you're not on Unix, you'll need to download endnotes.sty
        % from /usr/local/lib/tex/inputs (ACS Unix) to your computer.
        % (Or from /usr/local2/lib/tex/inputs on Math & Stats Unix.)

\newtheorem{thm*}{Theorem}

\newtheorem{thm}{Theorem}[section]

\newtheorem{dfn}{Definition}[section]

\newtheorem{lemma}{Lemma}[section]

\newtheorem{remark}{Remark}[section]

\newtheorem{cor}{Corollary}[section]

\usepackage{fullpage}

\begin{document}

\def\d{ \partial } 
\def\Na{{\mathbb{N}}}

\def\Z{{\mathbb{Z}}}

\def\IR{{\mathbb{R}}}

\def\L{ {\mathcal{L}}}

\newcommand{\E}[0]{ \varepsilon}

\newcommand{\s}[0]{ \mathcal{S}}

\newcommand{\AO}[1]{\| #1 \| }

\newcommand{\BO}[2]{ \left( #1 , #2 \right) }

\newcommand{\CO}[2]{ \left\langle #1 , #2 \right\rangle}

\newcommand{\co}[1]{ #1^{\prime}} 

\newcommand{\p}[0]{ p^{\prime}} 

\newcommand{\m}[1]{   \mathcal{ #1 }}

%  norm of H
\newcommand{ \A}[1]{ \left\| #1 \right\|_H }

% inner product H 
\newcommand{\B}[2]{ \left( #1 , #2 \right)_H }

% H^* , H pairing
\newcommand{\C}[2]{ \left\langle #1 , #2 \right\rangle_{  H^* , H } }

 \newcommand{\HON}[1]{ \| #1 \|_{ H^1} }

% Omega    \Om
\newcommand{ \Om }{ \Omega}

% \partial Omega      \pOm 
\newcommand{ \pOm}{\partial \Omega}

%   D(\Omega)   \D
\newcommand{\D}{ \mathcal{D} \left( \Omega \right)} 

\newcommand{\Ov}{ \overline{ \Omega}}

% D'( Omega)        \DP 
\newcommand{\DP}{ \mathcal{D}^{\prime} \left( \Omega \right)  }

% D' pairing
\newcommand{\DPP}[2]{   \left\langle #1 , #2 \right\rangle_{  \mathcal{D}^{\prime}, \mathcal{D} }}

% (H^1)^* , H^1    (( pairing ))      \PHH
\newcommand{\PHH}[2]{    \left\langle #1 , #2 \right\rangle_{    \left(H^1 \right)^*  ,  H^1   }    }

%  H^{-1} , H_0^1  (( pairing ))   \PHO
\newcommand{\PHO}[2]{  \left\langle #1 , #2 \right\rangle_{  H^{-1}  , H_0^1  }} 

 %  H^1(\Omega)     \HO
 \newcommand{\HO}{ H^1 \left( \Omega \right)}

%  H_0^1( \Omega)       \HOO
\newcommand{\HOO}{ H_0^1 \left( \Omega \right) }

% C_c^\infty(omega)
\newcommand{\CC}{C_c^\infty\left(\Omega \right) }

%H_0^1(Omega)  norm
\newcommand{\N}[1]{ \left\| #1\right\|_{ H_0^1  }  }

%H_0^1(Omega)   innerproduct 
\newcommand{\IN}[2]{ \left(#1,#2\right)_{  H_0^1} }

% H^1(\Omega) inner product
\newcommand{\INI}[2]{ \left( #1 ,#2 \right)_ { H^1}} 

% (H^1(\Omega))^*
\newcommand{\HH}{   H^1 \left( \Omega \right)^* } 

% ( H^{-1}(\Omega))
\newcommand{\HL}{ H^{-1} \left( \Omega \right) }

\newcommand{\HS}[1]{ \| #1 \|_{H^*}}

\newcommand{\HSI}[2]{ \left( #1 , #2 \right)_{ H^*}}

\newcommand{\WO}{ W_0^{1,p}} 
\newcommand{\w}[1]{ \| #1 \|_{W_0^{1,p}}}  

\newcommand{\ww}{(W_0^{1,p})^*}

\newcommand{\labeld}[1]{ \mbox{ \qquad (#1)}  \qquad   \label{ #1}  }

\newcommand{\bi}{\Delta^2}
\newcommand{\la}{\lambda}
\newcommand{\f}{\frac}
\newcommand{\wt}{\widetilde}
\newcommand{\dis}{\displaystyle}
\newcommand{\om}{\Omega}

\title{The critical dimension for a fourth order elliptic problem with singular nonlinearity}

\author{\sc{Craig COWAN\footnote{Department of Mathematics, University of British Columbia, Vancouver, B.C. Canada V6T 1Z2. E-mail: cowan@math.ubc.ca. }
\quad
Pierpaolo ESPOSITO\footnote{Dipartimento di Matematica,
Universit\`a degli Studi ``Roma Tre", 00146 Roma, Italy. E-mail: esposito@mat.uniroma3.it. Research supported by M.U.R.S.T., project ``Variational methods and
nonlinear differential equations".}\quad
Nassif GHOUSSOUB\footnote{Department of Mathematics, University of British Columbia, Vancouver, B.C. Canada V6T 1Z2. E-mail: nassif@math.ubc.ca. Research partially supported by the Natural Science and Engineering Research Council of Canada.}\quad
}}
\date{\today}

\smallbreak
\maketitle

\begin{abstract} We study the regularity of the extremal solution of the semilinear biharmonic equation $\bi u=\f{\lambda}{(1-u)^2}$, which models a simple Micro-Electromechanical System (MEMS) device on  a ball $B\subset\IR^N$, under Dirichlet boundary conditions
$u=\partial_\nu u=0$ on $\partial B$.  We complete here the results of F.H. Lin and Y.S. Yang \cite{LY}  regarding the identification of a ``pull-in voltage"  $\la^*>0$ such that  a  stable classical solution $u_\la$ with $0<u_\la<1$ exists for $\la\in (0,\la^*)$, while there is none of any kind when $\la>\la^*$. Our main result asserts  that  the extremal solution $u_{\lambda^*}$ is regular $(\sup_B u_{\lambda^*}  <1)$ provided $ N \le 8$ while  $u_{\lambda^*} $ is singular ($\sup_B u_{\lambda^*} =1$) for $N \ge 17$, in which case $1-C_0|x|^{4/3}\leq u_{\lambda^*} (x) \leq  1-|x|^{4/3}$ on the unit ball, where $ C_0:= \left( \frac{\lambda^*}{\overline{\lambda}}\right)^\frac{1}{3}$ and $ \bar{\lambda}:= \frac{8 (N-\frac{2}{3}) (N- \frac{8}{3})}{9}$. The singular character of the extremal solution for the remaining cases (i.e., when $9\leq N\leq 16$) requires a computer assisted proof and will not be addressed in this paper. 
\end{abstract}

\section{Introduction}   
The following model has been proposed  for the  description of  the steady-state of a simple Electrostatic MEMS device:
\begin{equation}
\label{MEMS.1}
 \arraycolsep=1.5pt 
\left\{ \begin{array}{ll} 
\alpha \Delta^2 u = \left( \beta \int_\Omega | \nabla u|^2 dx + \gamma \right) \Delta u + \frac{ \lambda f(x)}{ (1-u)^2 \left( 1 + \chi \int_\Omega \frac{dx}{(1-u)^2} \right)}  &\quad \hbox{in }\Omega \\
0<u<1      &\quad \hbox{in } \Omega \\
u=\alpha \partial_\nu u =0  &\quad \hbox{on } \partial \Omega , \quad
\end{array} \right. \end{equation}
 where $ \alpha, \beta, \gamma, \chi \ge 0$, $ f \in C( \overline{\Omega},[0,1])$ are fixed, $ \Omega$ is a bounded domain in $ \IR^N$ and $ \lambda \ge 0$ is a varying parameter (see for example Bernstein and Pelesko \cite{BP}).  The function $ u(x)$ denotes the height above a point $ x \in \Omega\subset \IR^N$ of a dielectric membrane clamped on $ \pOm$,  
once it deflects torwards a ground plate fixed at height $z=1$, whenever a positive voltage -- proportional to $\lambda$ -- is applied.

\medskip \noindent In studying this problem,    
one typically makes various simplifying assumptions on the parameters $ \alpha, \beta, \gamma, \chi$, and the  first approximation of (\ref{MEMS.1}) that has been studied extensively so far  is the equation 
\begin{eqnarray*}
\hskip 150pt 
\left\{ \begin{array}{ll}
-\Delta u=   \lambda  \frac{f(x)}{(1-u)^2} &\text{in } \Omega\\
\hfill 0<u<1 \quad \quad &\text{in } \Omega\hskip 150pt (S)_{ \lambda,f} \\
\hfill u=0 \quad \quad \quad &\text{on }\partial \Omega,
\end{array} \right.
\end{eqnarray*}
  where we have set $ \alpha = \beta = \chi=0$ and $ \gamma=1$ (see for example \cite{Esp,EGG,GG} and the monograph \cite{EGG.book}) .   This simple model,  which lends itself to the vast literature on second order semilinear eigenvalue problems, is already a rich source of interesting mathematical problems. The case when the ``permittivity profile" $f$ is constant  ($f=1$) on a general domain  was studied in \cite{MP}, following the pioneering work of Joseph and Lundgren \cite{JL} who had considered the radially symmetric case. The case for a  non constant permittivity profile $ f$ was advocated 
by Pelesko \cite{P}, taken up by \cite{GPW}, and studied in depth in \cite{Esp,EGG,GG}.  The starting point of the analysis is the existence of a pull-in voltage $\lambda^*(\Omega, f)$, defined as
$$ \lambda^*(\Omega, f):= \sup \Big\{ \lambda >0: \hbox{there exists a classical solution of } (S)_{\lambda, f} \Big\}.$$ 
It is then shown  that for every $ 0 < \lambda < \lambda^*$,  there exists a smooth minimal (smallest) solution of $(S)_{\lambda, f}$, while for $ \lambda > \lambda^*$ there is no solution even in a weak sense.  Moreover, the branch $ \lambda \mapsto u_\lambda(x)$ is increasing for each $ x \in \Omega$,  and therefore the function  $u^*(x):= \lim_{\lambda \nearrow \lambda^*} u_\lambda(x)$ can be considered as a generalized solution that corresponds to the pull-in voltage $\lambda^*$.  Now the issue of the regularity of this extremal solution -- which, by elliptic regularity theory, is equivalent to whether $ \sup_\Omega u^*<1$ -- is  an important question for many reasons, not the least of which being the fact that it decides whether the set of solutions stops there, or whether a new branch of solutions emanates from a bifurcation state $(u^*,\lambda^*)$.  This issue turned out to depend closely on the dimension and on the permittivity profile $f$. Indeed, it was shown in \cite{GG} that $u^*$ is regular in dimensions $1\leq N\leq 7$, while it is not necessarily the case for $N\geq 8$. In other words, the dimension $N=7$ is critical for equation $(S)_\lambda$ (when $f=1$, we simplify the notation $(S)_{\lambda,1}$ into $(S)_\lambda$).  On the other hand, it is shown in \cite{EGG} that the regularity of $u^*$ can be restored in any dimension,  provided we allow for a power law profile $|x|^\eta$ with $\eta$ large enough. 

\medskip \noindent The case where $ \beta = \gamma = \chi=0$ (and $ \alpha=1$) in the above model, that is when we are dealing with the following fourth order analog of $(S)_\lambda$ 
\begin{eqnarray*}
\hskip 150pt 
\left\{ \begin{array}{ll}
\bi u=   \frac{\lambda}{(1-u)^2} &\text{in } \Omega\\
0<u<1 &\text{in } \Omega \hskip 150pt (P)_\lambda \\
u=\partial_\nu u=0 &\text{on }\partial \Omega,
\end{array} \right.
\end{eqnarray*}
was also considered by \cite{CDG,LY} but with limited success. One of the reasons is the lack of  a ``maximum principle" which plays such a crucial role in developing the theory for the Laplacian. Indeed, it is a well known fact that 
    such a principle does not normally hold for general domains $ \Omega$ (at least for the clamped boundary conditions $ u = \partial_\nu u =0$ on $ \pOm$) unless one restricts attention to the unit ball $ \Omega =B$   in $ \IR^N$,  where one can exploit  a positivity preserving property of $\bi$ due to T. Boggio \cite{Boggio}.  
  This is precisely what was done in the references mentioned above, where a  theory of the minimal branch associated with $(P)_\lambda$ is developed along the same lines as for $(S)_\lambda$. The second obstacle is the well-known difficulty of extracting energy estimates for solutions of  fourth order problems from their stability properties.  This means that the methods used to analyze the regularity of the extremal solution for $(S)_\lambda$ could not carry to the corresponding problem for $(P)_\lambda$. 
   
\medskip \noindent This is the question we address in this paper as we eventually show the following result.
 
\begin{thm}  The unique extremal solution $u^*$ for $(P)_{\lambda^*}$ in $B$ is regular in dimension $1\leq  N \le 8$, while  it is singular (i.e, $ \sup_B u^*=1$)  for $ N \ge 17$.   
\end{thm}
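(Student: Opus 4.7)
My plan splits naturally along the two dimension ranges. For the regularity statement in dimensions $1\le N\le 8$, I would exploit the semi-stability of the minimal branch: for each $\lambda<\lambda^*$, $u_\lambda$ is classical with $u_\lambda=\partial_\nu u_\lambda=0$ on $\partial B$ and satisfies
\[
\int_B (\Delta\phi)^2\,dx \;\ge\; 2\lambda\int_B \frac{\phi^2}{(1-u_\lambda)^3}\,dx\qquad\forall\,\phi\in H_0^2(B).
\]
Inserting test functions of the form $\phi=(1-u_\lambda)^{-t}-1$ (legitimate because of the clamped boundary data), integrating by parts, and substituting $\Delta^2 u_\lambda=\lambda/(1-u_\lambda)^2$ from the equation would yield, for a range of exponents $t$, uniform bounds on $\|(1-u_\lambda)^{-1}\|_{L^p(B)}$. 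A standard elliptic bootstrap then upgrades this to $\inf_B(1-u_\lambda)\ge\delta>0$ uniformly in $\lambda$, provided $N$ is small enough that the admissible range of $t$ is non-empty; a direct numerical check will show this happens exactly for $N\le 8$. Passing to the limit $\lambda\nearrow\lambda^*$ gives $\sup_B u^*<1$.

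For the singular regime $N\ge 17$, the strategy is to identify the extremal solution with the explicit function $U(x)=1-|x|^{4/3}$. A direct computation using $\Delta|x|^\alpha=\alpha(\alpha+N-2)|x|^{\alpha-2}$ yields
\[
\Delta^2 U(x)=\bar\lambda\,|x|^{-8/3}=\frac{\bar\lambda}{(1-U)^2},\qquad \bar\lambda=\tfrac{8}{9}\bigl(N-\tfrac{2}{3}\bigr)\bigl(N-\tfrac{8}{3}\bigr),
\]
and rescaling $U_{C_0}:=1-C_0|x|^{4/3}$ gives the same PDE at $\lambda=C_0^3\bar\lambda$; choosing $C_0=(\lambda^*/\bar\lambda)^{1/3}$ thus produces a singular function solving the MEMS equation at the pull-in voltage $\lambda^*$. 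The linearization at $U_{C_0}$ has the potential $2\lambda^*/(1-U_{C_0})^3=2\bar\lambda/|x|^4$, so semi-stability of $U_{C_0}$ reduces to the Hardy--Rellich inequality
\[
\int_B (\Delta\phi)^2\,dx\;\ge\;\frac{N^2(N-4)^2}{16}\int_B\frac{\phi^2}{|x|^4}\,dx,\qquad \phi\in H_0^2(B),
\]
provided $2\bar\lambda\le N^2(N-4)^2/16$; a short arithmetic check shows this last inequality holds precisely from $N=17$ onward.

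To conclude that $u^*$ coincides with $U$ in the singular sense, I would proceed in two steps. First, using the supersolution role played by $1-|x|^{4/3}$ for $\lambda\le\bar\lambda$, together with Boggio's positivity-preserving principle on the ball applied to the differences in the monotone iteration generating the minimal branch, obtain the pointwise upper bound $u_\lambda\le 1-|x|^{4/3}$ and hence $u^*\le 1-|x|^{4/3}$. Second, the reverse inequality $u^*\ge 1-C_0|x|^{4/3}$ is produced from the semi-stability of $u^*$: combining the stability inequality for $u^*$ with the Hardy--Rellich bound at the $N\ge 17$ threshold forces $(1-u^*)^{-3}$ to be comparable to $|x|^{-4}$, from which the explicit pointwise lower bound follows. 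Taking $x\to 0$ in $1-u^*(x)\le C_0|x|^{4/3}$ yields $u^*(0)=1$, so $u^*$ is singular. The main obstacle in this plan is the fourth step: the clean comparison that works for the second-order MEMS equation relies on the usual maximum principle, which fails for $\Delta^2$; here we must replace it by a delicate combination of Boggio's formula, semi-stability, and Hardy--Rellich, while also handling the mismatch that $1-|x|^{4/3}$ has vanishing trace but non-vanishing normal derivative on $\partial B$, which is why the rescaled function $U_{C_0}$ with $C_0\ge 1$ (forcing $U_{C_0}\le 0$ on $\partial B$) is the right object to compare against.
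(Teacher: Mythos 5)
There are genuine gaps in both halves of your plan. For the regularity part ($1\le N\le 8$), the test-function scheme $\phi=(1-u_\lambda)^{-t}-1$ is exactly the second-order method that does \emph{not} transfer to the biharmonic setting: the identity that makes it work for $-\Delta$ is the chain-rule computation $\int|\nabla g(u)|^2=\int g'(u)^2|\nabla u|^2$, which can be integrated by parts back onto the equation. For $\Delta^2$ one has $\Delta g(u)=g'(u)\Delta u+g''(u)|\nabla u|^2$, and the square of this expression cannot be reduced to the equation; this is precisely the obstruction the paper identifies as the reason the $(S)_\lambda$ techniques fail here. The actual argument is entirely different: one first proves the pointwise estimate $1-u^*(x)\le C_0|x|^{4/3}$ for a singular $u^*$ (Theorem \ref{touchdown}), which requires the whole theory of $(P)_{\lambda,\alpha,\beta}$ with non-homogeneous boundary data, rescalings $w\mapsto r_1^{-4/3}(w(r_1r)-1)+1$, and the uniqueness of semi-stable singular $H^2$-weak solutions; only then does one insert $\psi=|x|^{(4-N)/2+\varepsilon}$ into the stability inequality to obtain $2\bar\lambda\le H_N$ and hence $N\ge 9$, a contradiction for $5\le N\le 8$. (Dimensions $1\le N\le 4$ are handled separately by $W^{4,3/2}$ regularity and Sobolev embedding.) Your plan contains no substitute for Theorem \ref{touchdown}, and your assertion that a ``direct numerical check'' of the admissible range of $t$ gives exactly $N\le 8$ is unsupported.

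For the singular part, your arithmetic is off at the decisive point: the inequality $2\bar\lambda\le H_N=\tfrac{N^2(N-4)^2}{16}$ holds already for all $N\ge 9$, not ``precisely from $N=17$ onward,'' so it cannot be the source of the threshold $17$. What is actually needed is the upper bound $\lambda^*\le \tfrac{H_N}{2}$, and nothing in your plan produces an upper bound on $\lambda^*$. The paper obtains it by exhibiting explicit singular, semi-stable $H^2(B)$-weak \emph{sub}-solutions $w_m=1-\tfrac{3m}{3m-4}r^{4/3}+\tfrac{4}{3m-4}r^{m}$ (with $m=2$ for $N\ge 31$ and $m=3$, Maple-verified, for $17\le N\le 30$) and invoking the comparison Lemma \ref{poo}: if $\lambda^*>\tfrac{H_N}{2}$ the minimal (hence regular) solution would lie above a singular function, a contradiction. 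Once $\lambda^*\le\tfrac{H_N}{2}$ is known, singularity of $u^*$ follows from the upper bound $u^*\le 1-|x|^{4/3}$ together with the \emph{improved} Hardy--Rellich inequality (with remainder $C\int_B\psi^2$), which gives $\mu_1(u^*)>0$ strictly and thus contradicts $\mu_1(u^*)=0$ for a regular extremal solution; your alternative of forcing ``$(1-u^*)^{-3}$ comparable to $|x|^{-4}$'' from semi-stability is not an argument. Finally, your derivation of $u^*\le 1-|x|^{4/3}$ via Boggio applied to the monotone iteration only works when $1-|x|^{4/3}$ is a super-solution, i.e.\ for $\lambda\le\bar\lambda$; for $\lambda\in(\bar\lambda,\lambda^*)$ the paper again needs the rescaling-plus-Lemma-\ref{poo} comparison, and the boundary mismatch ($\partial_\nu(1-|x|^{4/3})\ne 0$) that you flag at the end is resolved only by that machinery, not by replacing $\bar u$ with $U_{C_0}$.
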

\noindent Actually, we believe that the critical dimension for $(P)_\lambda$ in $B$ is $N=8$, as opposed to being equal to $7$ in $(S)_\lambda$.  We add that our methods are heavily inspired by the recent paper of Davila et al. \cite{DDGM} where it is shown that $N=12$ is the critical dimension for the fourth order nonlinear eigenvalue problem 
$$\left\{\begin{array}{ll}
\bi u=   \lambda e^u &\text{in } B\\
u=\partial_\nu u=0 &\text{on }\partial B,
\end{array} \right. $$
while the critical dimension for its second order counterpart (i.e.,  the Gelfand problem) is $N=9$.
 
\medskip \noindent Throughout this  paper, we will always consider problem $(P)_\lambda$ on the unit ball $B$. We start by recalling some of the results from \cite{CDG} concerning $(P)_\lambda$, that will be needed in the sequel.  We  define 
$$\lambda^*:= \sup\Big\{ \lambda> 0: \hbox{ there exists a classical solution of }(P)_\lambda \Big\},$$ 
and note that we are not restricting our attention to radial solutions. We will deal also with weak solutions:
\begin{dfn} We say that $u$ is a weak solution of $(P)_\lambda $ if $ 0 \le u \le 1$ a.e. in $B$, $ \frac{1}{(1-u)^2} \in L^1(B)$ and 
\[ \int_B u \Delta^2 \phi = \lambda \int_B \frac{\phi}{(1-u)^2}, \qquad \forall \phi \in C^4(\bar B) \cap H_0^2(B).\] 
We say that $ u $ is a weak super-solution (resp. weak sub-solution) of $(P)_\lambda$ if the equality is replaced with the inequality $ \ge $ (resp. $ \le $) for all $\phi \in C^4(\bar B) \cap H_0^2(B)$ with $\phi \ge 0$.
\end{dfn}  
\noindent We also introduce notions of regularity and stability.
\begin{dfn} Say that a weak solution $u$ of $(P)_\lambda $ is regular (resp. singular) if $\|u\|_\infty<1$ (resp. $=$) and stable (resp. semi-stable) if 
$$\mu_1(u)=\inf \left\{ \int_B ( \Delta \phi)^2 -2 \lambda \int_B \frac{ \phi^2}{(1-u)^3}: \phi \in H_0^2(B), \| \phi \|_{L^2}=1 \right\}$$
is positive (resp. non-negative).
\end{dfn}  
\noindent The following extension of Boggio's principle will be frequently used in the sequel (see \cite[Lemma 16]{AGGM} and \cite[Lemma 2.4]{DDGM}): 
\begin{lemma}[Boggio's Principle]\label{boggio}
Let $u\in L^1(B)$.  Then $u\geq 0$ a.e. in $B$, provided one of the following conditions hold:
\begin{enumerate} 

\item $u\in C^4(\overline{B})$,  $\bi u\geq 0$ on $B$, and  $u=\frac{\partial u}{\partial n}= 0$ on $\partial B$. 
\item $\int_{B} u\bi\phi\,dx\geq 0$ for all $0\leq \phi \in C^4(\overline{B})\cap H_0^2(B)$.
\item $u\in H^2(B)$, $u=0$ and $\frac{\partial u}{\partial n} \leq 0$ on $\partial B$, and  
 $\int_{B} \Delta u \Delta \phi \geq 0$  for all $0\leq \phi \in H^2_0(B)$. 
\end{enumerate} 
Moreover, either $u\equiv 0$ or $u>0$ a.e. in $B$.
\end{lemma}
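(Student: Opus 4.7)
The plan is to derive all three conditions from the classical Boggio positivity of the bilaplacian Green's function on the ball, then obtain the dichotomy by exploiting strict positivity of $G$ in the interior.

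First, I would dispense with (1) as the base case. Let $G(x,y)$ be Boggio's explicit Green's function for $\Delta^2$ on $B$ with the clamped conditions $\phi=\partial_\nu \phi=0$ on $\partial B$; it is classical that $G(x,y)>0$ for $x,y\in B$, $x\ne y$. For $u\in C^4(\bar B)$ with $u=\partial_\nu u=0$ on $\partial B$, the representation $u(x)=\int_B G(x,y)\,\Delta^2 u(y)\,dy$ together with $\Delta^2 u\ge 0$ immediately gives $u\ge 0$, with strict positivity everywhere in $B$ unless $\Delta^2 u\equiv 0$ (in which case $u\equiv 0$ by the uniqueness of the clamped biharmonic problem).

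Next I would prove (2) by duality. Pick any nonnegative $f\in C_c^\infty(B)$, and let $\phi$ be the classical solution of $\Delta^2\phi=f$ in $B$ with $\phi=\partial_\nu\phi=0$ on $\partial B$; by elliptic regularity $\phi\in C^4(\bar B)\cap H^2_0(B)$, and by (1) we have $\phi\ge 0$. The hypothesis then yields
\[
\int_B uf\,dx=\int_B u\,\Delta^2\phi\,dx\ge 0,
\]
and since $f$ is arbitrary, $u\ge 0$ a.e.\ in $B$.

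The key step is (3), where I reduce to (2) via integration by parts. For any nonnegative $\phi\in C^4(\bar B)\cap H^2_0(B)$, two integrations by parts (the first boundary term vanishes because $u=0$ on $\partial B$) give
\[
\int_B u\,\Delta^2\phi\,dx=\int_B \Delta u\,\Delta\phi\,dx-\int_{\partial B}(\partial_\nu u)\,\Delta\phi\,dS.
\]
The first term is $\ge 0$ by assumption. For the boundary term, since $\phi\equiv 0$ and $\partial_\nu\phi\equiv 0$ on $\partial B$, the tangential Hessian and the mean-curvature contribution to $\Delta\phi$ both vanish on $\partial B$, leaving $\Delta\phi=\partial_\nu^2\phi$ on $\partial B$; and $\phi\ge 0$ with $\phi=\partial_\nu\phi=0$ forces $\partial_\nu^2\phi\ge 0$ on $\partial B$. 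Combined with $\partial_\nu u\le 0$, the boundary integral is $\ge 0$, so $\int_B u\,\Delta^2\phi\,dx\ge 0$ and (2) applies. The main obstacle is precisely this boundary-term computation: one must justify the integration by parts at the $H^2$ regularity level (by density of smooth functions vanishing to second order at $\partial B$, or by truncating $\phi$ slightly away from $\partial B$) and rigorously verify the sign of $\Delta\phi|_{\partial B}$.

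Finally, for the alternative $u\equiv 0$ or $u>0$ a.e., one uses the strict positivity of $G$ in the interior. In each of the three settings one obtains, for test functions $\phi$ as in (2), a nonnegative measure $\mu$ (in case (1), $\mu=\Delta^2 u\,dx$; in cases (2)--(3), $\mu$ is the distribution $\Delta^2 u$, which by the above arguments is a nonnegative measure on $B$) such that $u(x)=\int_B G(x,y)\,d\mu(y)$ a.e. If $u\not\equiv 0$ then $\mu\ne 0$, and since $G(x,\cdot)>0$ on $B\setminus\{x\}$ the integral is strictly positive for a.e.\ $x\in B$, giving $u>0$ a.e.
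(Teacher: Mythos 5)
The paper does not actually prove this lemma --- it is quoted from the literature (Lemma 16/17 of \cite{AGGM} and Lemma 2.4 of \cite{DDGM}) --- so the relevant comparison is with those sources, and your parts (1)--(3) reproduce their standard argument correctly: Boggio's positive Green function gives (1); duality against clamped solutions of $\Delta^2\phi=f$, $0\le f\in C_c^\infty(B)$, gives (2); and (3) reduces to (2) by two integrations by parts once one observes that $\Delta\phi=\partial_\nu^2\phi\ge 0$ on $\partial B$ for a nonnegative $\phi$ with $\phi=\partial_\nu\phi=0$ there. The sign computation for the boundary term and the density argument you flag are exactly the points one must check, and they go through as you describe.

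The one genuine gap is in the dichotomy ``$u\equiv 0$ or $u>0$ a.e.'' You assert that in cases (2)--(3) one has the representation $u(x)=\int_B G(x,y)\,d\mu(y)$ with $\mu=\Delta^2 u\ge 0$, but this does not follow from what you have established. Testing against $0\le\phi\in C_c^\infty(B)$ only shows that $\Delta^2 u$ is a nonnegative \emph{locally finite} measure on $B$; you have not shown that $\mu$ is finite, that the Green potential $\int_B G(\cdot,y)\,d\mu(y)$ is well defined, nor --- most importantly --- that $u$ equals this potential rather than differing from it by a biharmonic function $h$, since the clamped boundary behaviour of $u$ in cases (2)--(3) is encoded only weakly through the full class of test functions $\phi\in C^4(\bar B)\cap H_0^2(B)$ and not through pointwise boundary values. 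In particular, if $\mu=0$ your argument would conclude $u\equiv 0$, which requires ruling out a nontrivial nonnegative biharmonic remainder. The standard repairs are either to quantify the positivity (using $G(x,y)\ge c\,d(x)^2 d(y)^2$ on the ball, as in \cite{AGGM}) so that $\int_B u f>0$ for every nonnegative $f\not\equiv 0$ once $u\not\equiv 0$, or to invoke interior regularity: where $\Delta^2 u$ vanishes, $u$ is biharmonic, hence real-analytic, and a nonnegative real-analytic function on a connected open set cannot vanish on a set of positive measure without vanishing identically. Either route closes the gap; as written, the representation formula is an unproved shortcut.
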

 
\noindent The following  theorem summarizes the main results in \cite{CDG} that will be needed in the sequel:  
\begin{thm}\label{CdG} The following assertions hold:
\begin{enumerate}
\item For each $ 0 < \lambda < \lambda^*$ there exists a classical minimal solution $u_\lambda$ of $ (P)_\lambda$. Moreover $ u_\lambda $ is radial and radially decreasing.  
\item  For $ \lambda > \lambda^*$,  there are no weak solutions of $(P)_\lambda$.  
\item For each $ x \in B$ the map $ \lambda \mapsto u_\lambda(x)$ is strictly increasing on $ (0,\lambda^*)$.  
\item The pull-in voltage $ \lambda^*$ satisfies the following bounds: 
\[ \max \left\{ \frac{ 32(10N-N^2-12)}{27}, \frac{128-240N+72N^2}{81} \right\} \le \lambda^* \le \frac{4 \nu_1}{27}\] 
where $ \nu_1$ denotes the first eigenvalue of $ \Delta^2 $ in $H_0^2(B)$.  
\item For each $ 0 < \lambda < \lambda^*$, $u_\lambda$ is a stable solution (i.e., $ \mu_1(u_\lambda)>0$). 
\end{enumerate}
\end{thm}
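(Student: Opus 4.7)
The plan is to establish the five assertions in turn, using Boggio's principle (Lemma \ref{boggio}) as the systematic replacement for the maximum principle.

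\textbf{Existence and geometry of the minimal branch (assertion 1).} I would set up the monotone iteration $u_0\equiv 0$, $\Delta^2 u_{k+1}=\lambda(1-u_k)^{-2}$ in $B$ with $u_{k+1}=\partial_\nu u_{k+1}=0$ on $\partial B$. Boggio's principle gives $u_{k+1}\geq 0$, and if one can exhibit a classical supersolution $\bar u<1$ for a given $\lambda$, then the same principle applied to $\bar u-u_{k+1}$ yields $u_k\le \bar u<1$ by induction, so $u_\lambda:=\lim u_k$ is a classical solution. A concrete supersolution is obtained by solving $\Delta^2 w=1$ with clamped data (the explicit Boggio polynomial) and setting $\bar u=cw$ with $c$ chosen so that $c\ge \lambda(1-c\|w\|_\infty)^{-2}$; this defines a candidate interval $(0,\lambda_0)$ which, combined with the monotonicity below, extends up to $\lambda^*$. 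Radiality of $u_\lambda$ follows because the iteration is rotation-equivariant with a radial initial datum; radial monotonicity ($\partial_r u_\lambda\le 0$) follows either by symmetrization of the Boggio Green's function or, more directly, by applying Lemma \ref{boggio} to $-\partial_r u_\lambda$, which solves a fourth order inequality with the right boundary behavior.

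\textbf{Monotonicity in $\lambda$ (assertion 3) and stability (assertion 5).} If $0<\lambda_1<\lambda_2<\lambda^*$, the minimal solution $u_{\lambda_1}$ is a subsolution for $(P)_{\lambda_2}$, so starting the iteration for $(P)_{\lambda_2}$ from $u_{\lambda_1}$ produces a classical solution $\geq u_{\lambda_1}$; minimality forces $u_{\lambda_2}\ge u_{\lambda_1}$, and strict inequality pointwise comes from the alternative in Lemma \ref{boggio}. For (5), I would argue that the set $\{\lambda\in(0,\lambda^*):\mu_1(u_\lambda)>0\}$ is nonempty (for $\lambda$ small, $2\lambda(1-u_\lambda)^{-3}$ is small in $L^\infty$ while the bottom of $\Delta^2$ is $\nu_1>0$), open (by continuity of the principal eigenvalue along the smooth branch), and closed in $(0,\lambda^*)$: if $\mu_1(u_{\lambda_0})=0$ with associated positive eigenfunction $\phi_0$, then differentiating the minimal branch with respect to $\lambda$ and testing the resulting linearized equation against $\phi_0$ gives $\int_B (1-u_{\lambda_0})^{-2}\phi_0=0$, contradicting positivity of both factors.

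\textbf{Bounds on $\lambda^*$ (assertion 4).} For the upper bound, I would test $(P)_\lambda$ against the first positive eigenfunction $\phi_1$ of $\Delta^2$ in $H_0^2(B)$ (positivity via Boggio), yielding $\nu_1\int_B u\phi_1=\lambda\int_B(1-u)^{-2}\phi_1\geq \lambda\int_B(1-u)^{-2}\phi_1$, and then use the elementary pointwise minimum of $t\mapsto (1-t)^{-2}\cdot(\text{linear in }t)$, whose optimization yields the factor $4/27$. For the lower bounds, I would produce explicit radial supersolutions of the form $a-b|x|^2+c|x|^4$ satisfying the clamped conditions and the differential inequality $\Delta^2 \bar u\ge \lambda(1-\bar u)^{-2}$, optimizing the coefficients to obtain each of the quoted quantities; by the supersolution criterion from (1), any such $\lambda$ lies below $\lambda^*$.

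\textbf{Nonexistence for $\lambda>\lambda^*$ (assertion 2).} This is the main obstacle, since weak solutions, not just classical ones, must be excluded. I would adapt the truncation/convexity method of Brezis--Cazenave--Martel--Ramiandrisoa to the fourth order setting: suppose $u$ is a weak solution of $(P)_{\bar\lambda}$ for some $\bar\lambda>\lambda^*$. For $\epsilon>0$ introduce the bounded convex nonlinearity $g_\epsilon(t)=(1-\min(t,1-\epsilon))^{-2}$ and, for each $\lambda<\bar\lambda$, solve $\Delta^2 v=\lambda g_\epsilon(v)$ with clamped data by monotone iteration from $0$; the weak inequality satisfied by $u$, together with Boggio's principle applied in its weak form (item (2) of Lemma \ref{boggio}), shows that the iterates stay below $u$, hence strictly below $1$. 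Convexity of $g_\epsilon$ then lets one pass to the limit $\epsilon\to 0$ to obtain a classical solution of $(P)_\lambda$ for every $\lambda<\bar\lambda$, contradicting the definition of $\lambda^*$. The delicate point, which is where fourth order differs from second order, is justifying the weak comparison $v\le u$ without a pointwise maximum principle; this is precisely the content of the weak version of Boggio's principle in Lemma \ref{boggio}(2), which is why that extension is singled out above.
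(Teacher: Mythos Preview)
The paper does not prove Theorem~\ref{CdG}: it is stated there as a summary of results from \cite{CDG} (``The following theorem summarizes the main results in \cite{CDG}\ldots''). That said, the paper does prove the analogous statements for the inhomogeneous problem $(P)_{\lambda,\alpha,\beta}$ in Section~2 (Theorems \ref{quasi} and \ref{stable}), so a comparison of methods is meaningful. Your overall architecture---monotone iteration under Boggio's principle, a Brezis--Cazenave--Martel--Ramiandrisoa type argument for (2), eigenfunction testing for the upper bound in (4)---matches what is done there and in \cite{CDG}. Two points, however, deserve correction.

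\textbf{Assertion (5).} Your closure argument hinges on ``differentiating the minimal branch with respect to $\lambda$'' at the very parameter $\lambda_0$ where $\mu_1(u_{\lambda_0})=0$. But the linearized operator $\Delta^2-2\lambda_0(1-u_{\lambda_0})^{-3}$ is non-invertible there, so the Implicit Function Theorem gives no smooth branch through $\lambda_0$ and the derivative $\partial_\lambda u_\lambda|_{\lambda_0}$ is not a priori available. The paper's route (proof of Theorem~\ref{stable}(1), via Lemma~\ref{shi}(ii)) avoids differentiation entirely: if $\mu_1(u_{\lambda_0})=0$ and $\mu\in(\lambda_0,\lambda^*)$, then $u_\mu$ is a classical super-solution of $(P)_{\lambda_0}$, and a convexity argument (second variation of $t\mapsto \int \Delta[tu_\mu+(1-t)u_{\lambda_0}]\Delta\phi_0-\lambda_0\int f(tu_\mu+(1-t)u_{\lambda_0})\phi_0$, with $\phi_0$ the first eigenfunction) forces $u_\mu=u_{\lambda_0}$, a contradiction. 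Your idea can be repaired along the same lines by replacing the derivative with the finite difference $u_\mu-u_{\lambda_0}$ and invoking convexity of $t\mapsto(1-t)^{-2}$, but as written it is a gap.

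\textbf{Assertion (4), lower bounds.} A radial polynomial $a-b|x|^2+c|x|^4$ with clamped data will produce the first quantity $\frac{32(10N-N^2-12)}{27}$, but it cannot yield the second bound $\bar\lambda=\frac{8(N-\frac{2}{3})(N-\frac{8}{3})}{9}=\frac{128-240N+72N^2}{81}$. That bound comes from the \emph{singular} profile $\bar u=1-|x|^{4/3}$, which satisfies $\Delta^2\bar u=\bar\lambda(1-\bar u)^{-2}$ exactly but with $\partial_\nu\bar u=-\frac{4}{3}\neq 0$; it is used as an $H^2$-weak super-solution with the ``wrong'' (but favorable) normal derivative, and Boggio's principle in the form of Lemma~\ref{boggio}(3) is what makes the comparison go through. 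Your polynomial ansatz misses this, and with it the bound that is actually used later in the paper (e.g.\ in Theorem~\ref{touchdown} and Corollary~\ref{lambda.bar}).

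\textbf{Minor point on (1).} Applying Lemma~\ref{boggio} to $-\partial_r u_\lambda$ is not straightforward: $\partial_r u_\lambda$ does not satisfy clamped boundary conditions on $\partial B$, nor is it obviously governed by a clean biharmonic inequality on $B$. The paper (in the generalized setting after Theorem~\ref{cch}) instead appeals to \cite{Sor} for radial monotonicity of $H_0^2$-solutions with nonnegative right-hand side; your sketch would need a genuinely different argument here.
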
  
\noindent Using the stability of $ u_\lambda $,  it can be shown that  $ u_\lambda $ is uniformly bounded in $H_0^2(B)$ and that $ \frac{1}{1-u_\lambda} $ is uniformly bounded in $L^3(B)$.  Since now $ \lambda \mapsto u_\lambda(x)$ is increasing, the function $ u^*(x):= \lim_{ \lambda \nearrow \lambda^*} u_\lambda(x)$ is well defined (in the pointwise sense),  $ u^* \in H_0^2(B)$, $ \frac{1}{1-u^*} \in L^3(B)$ and $ u^*$ is a weak solution of $ (P)_{\lambda^*}$.   Moreover $ u^*$ is the unique weak solution of $(P)_{\lambda^*}$.

\medskip \noindent The second result we list from \cite{CDG} is critical in identifying the extremal solution.  
\begin{thm}  If $ u \in H_0^2(B)$ is a singular weak solution of $(P)_\lambda$, then $ u $ is  semi-stable if and only if $ (u, \lambda) =(u^*,\lambda^*)$. 
\end{thm}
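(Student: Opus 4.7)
The plan is to prove the two implications separately. For the \emph{if} direction---that the extremal pair $(u^*,\lambda^*)$ is semi-stable---I would fix $\phi \in H_0^2(B)$ and exploit the known stability of each minimal classical solution, namely
\[ \int_B (\Delta \phi)^2\,dx \;\geq\; 2 \lambda \int_B \frac{\phi^2}{(1-u_\lambda)^3}\,dx \qquad (0 < \lambda < \lambda^*). \]
Since $u_\lambda \nearrow u^*$ pointwise, the integrand on the right increases monotonically to $\phi^2/(1-u^*)^3$, and the monotone convergence theorem lets me pass to the limit $\lambda \nearrow \lambda^*$, which yields $\mu_1(u^*) \geq 0$.

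For the harder \emph{only if} direction, let $u$ be a singular semi-stable weak solution of $(P)_\lambda$. Nonexistence of weak solutions for $\lambda > \lambda^*$ rules out that range, so I only need to exclude $\lambda < \lambda^*$. In that regime the classical minimal solution $u_\lambda$ exists, and a comparison argument based on iterating Boggio's principle against $u$ (viewed as a weak supersolution of $(P)_\lambda$) yields $0 \leq u_\lambda \leq u$ a.e. Setting $w = u - u_\lambda \in H_0^2(B) \cap L^\infty(B)$, I would subtract the weak equations for $u$ and $u_\lambda$ (extended from $C^4(\overline{B}) \cap H_0^2(B)$ to $H_0^2(B) \cap L^\infty(B)$ by density, using $1/(1-u)^2 \in L^{3/2}(B)$) and test against $w$ itself to obtain
\[ \int_B (\Delta w)^2\,dx \;=\; \lambda \int_B \frac{(2 - u - u_\lambda)\, w^2}{(1-u)^2(1-u_\lambda)^2}\,dx. \]

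The crux is then the pointwise bound
\[ \frac{2 - u - u_\lambda}{(1-u)^2(1-u_\lambda)^2} \;=\; \frac{1}{(1-u)(1-u_\lambda)^2} + \frac{1}{(1-u)^2(1-u_\lambda)} \;\leq\; \frac{2}{(1-u)^3}, \]
which is immediate from $1-u \leq 1-u_\lambda$ and is strict wherever $u \neq u_\lambda$. Inserting this bound in the previous identity and comparing with the semi-stability inequality $\int_B (\Delta w)^2\,dx \geq 2\lambda \int_B w^2/(1-u)^3\,dx$ (applied to $\phi = w$) forces equality everywhere, hence $u \equiv u_\lambda$. This contradicts that $u$ is singular while $u_\lambda$ is classical. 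Therefore $\lambda = \lambda^*$, and the uniqueness of the weak solution at $\lambda^*$ recorded just above the theorem then gives $u = u^*$.

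The step I expect to demand the most care is the comparison $u_\lambda \leq u$: in the fourth-order setting there is no elementary maximum principle, so this must proceed via Lemma \ref{boggio}, typically by iteratively applying Boggio's positivity with $u$ serving as an a priori upper barrier for the monotone iteration producing $u_\lambda$. A secondary technical point is the admissibility of $w$ as a test function in the weak formulation, which relies on $w \in L^\infty(B)$ together with the $L^3$ bound for $1/(1-u)$ inherited from semi-stability (derived exactly as for the minimal branch in \cite{CDG}).
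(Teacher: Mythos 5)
Your proposal is correct, and for the key uniqueness direction it follows a genuinely different route from the one the paper uses for the corresponding general statement (Theorem \ref{stable}(4), proved via the comparison Lemma \ref{shi}). In Lemma \ref{shi} the difference $w=u-U$ has no a priori sign, so the paper must first split it by Moreau's dual-cone decomposition in $H_0^2(B)$, $w=w_1+w_2$ with $w_1\ge 0$ and $\Delta^2 w_2\le 0$, and then run the stability--convexity argument on the nonnegative part $w_1$. You sidestep the decomposition entirely by first securing the ordering $u_\lambda\le u$ from minimality (the monotone iteration from $0$, bounded above by the weak super-solution $u$ through Lemma \ref{boggio}(2), exactly as in the discussion surrounding Theorem \ref{super}), so that $w=u-u_\lambda\ge 0$ can be tested directly in both the difference of the weak equations and the semi-stability form. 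Note that your pointwise bound $\frac{2-u-u_\lambda}{(1-u)^2(1-u_\lambda)^2}\le\frac{2}{(1-u)^3}$ is precisely the convexity inequality $f(u)-f(u_\lambda)\le f'(u)(u-u_\lambda)$ for $f(s)=(1-s)^{-2}$ that also drives the paper's argument, so the analytic core is identical; what your version buys is a shorter, decomposition-free proof of this particular theorem, while the paper's lemma buys the generality of comparing with an arbitrary super-solution $U$ (where $u\le U$ is the conclusion rather than a hypothesis), which is needed elsewhere, e.g. in Theorem \ref{stable} and Lemma \ref{poo}. The two technical points you flag --- extending the admissible test functions to $H_0^2(B)\cap L^\infty(B)$ by approximation, and the integrability of $(1-u)^{-3}$ forced by semi-stability as in Lemma \ref{extremalsol} --- are exactly the right ones and are handled the same way in the paper; your appeal to the recorded uniqueness of the weak solution at $\lambda^*$ to conclude $u=u^*$ is legitimate, though one could instead run the comparison in both directions between $u$ and $u^*$ as the paper does.
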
 

\section{The effect of boundary conditions on the pull-in voltage}  

As in \cite{DDGM}, we are led to examine problem $(P)_\lambda$ with non-homogeneous boundary conditions such as
$$\hskip 150pt 
\left\{\begin{array}{ll}
\Delta^2 u= \frac{ \lambda}{(1-u)^2} &\hbox{in } B \\
\alpha<u<1 &\hbox{in }B \hskip 150 pt (P)_{\lambda, \alpha, \beta}\\
u= \alpha\:,\:\:\partial_\nu u = \beta &\hbox{on } \partial B, \end{array}\right.
$$
where $\alpha, \beta$ are given. 

\medskip \noindent Notice first that some restrictions on $ \alpha $ and $ \beta$ are necessary. Indeed, letting $\Phi(x):=( \alpha - \frac{\beta}{2} ) + \frac{\beta}{2} |x|^2 $ denote the unique solution of
\begin{equation} \label{Phi} \left\{ \begin{array}{ll}
\Delta^2 \Phi = 0 &\hbox{in } B \\
\Phi = \alpha\:,\:\: \partial_\nu \Phi = \beta&\hbox{on }\partial B,  
\end{array}\right. 
\end{equation} 
we infer immediately from Lemma \ref{boggio} that the function $u-\Phi$ is positive in $B$,  which yields to
$$\sup_B \Phi<\sup_B u\leq 1.$$ 
To insure that $\Phi$ is a classical sub-solution of $(P)_{\lambda,\alpha,\beta}$, we impose $\alpha \not= 1$ and $\beta \leq 0$, and condition $\displaystyle \sup_B \Phi<1$ rewrites as
$\alpha-\frac{\beta}{2} < 1$. We will then say that the pair $ (\alpha, \beta)$ is {\it admissible} if $\beta \leq 0$, and $\alpha-\frac{\beta}{2} < 1$.  

\medskip \noindent This section will be devoted to obtaining  results for $ (P)_{ \lambda, \alpha ,\beta}$ when $ (\alpha, \beta)$ is an admissible pair, which are  analogous to those  for $ (P)_\lambda$.  To cut down on notation, we shall  sometimes drop $ \alpha $ and $ \beta$ from our expressions whenever such an emphasis is not needed.  For example in this section $ u_\lambda $ and $ u^*$ will denote the minimal and extremal solution of $ (P)_{\lambda, \alpha , \beta}$. 

\medskip \noindent We now introduce a notion of weak solution for $(P)_{\lambda,\alpha,\beta}$. 
\begin{dfn} We say that $u$ is a weak solution of $(P)_{\lambda,\alpha,\beta}$ if $\alpha \leq u \le 1$ a.e. in $B$, $ \frac{1}{(1-u)^2} \in L^1(B)$ and if 
\[ \int_B (u-\Phi) \Delta^2 \phi = \lambda \int_B \frac{\phi}{(1-u)^2}, \qquad \forall \phi \in C^4(\bar B) \cap H_0^2(B),\] 
where $\Phi$ is given in (\ref{Phi}). We say that $ u $ is a weak super-solution (resp. weak sub-solution) of $(P)_{\lambda,\alpha,\beta}$ if the equality is replaced with the inequality $ \ge $ (resp. $ \le $) for $ \phi \ge 0$.
\end{dfn}  
\noindent We now define as before 
\[ \lambda^*:= \sup \{ \lambda > 0: (P)_{\lambda, \alpha, \beta} \; \mbox{ has a classical solution} \}\]
and
\[\lambda_*:= \sup \{ \lambda > 0: (P)_{\lambda, \alpha, \beta} \; \mbox{ has a weak solution} \}.\]  
Observe that by the Implicit Function Theorem,  one can always solve $(P)_{\lambda,\alpha,\beta}$ for small $\lambda$'s. Therefore, $\lambda^*$ (and also $\lambda_*$) is well defined.

\medskip \noindent Let now $U$ be a weak super-solution of $(P)_{\lambda,\alpha,\beta}$. Recall the following standard existence result. 
\begin{thm} [\cite{AGGM}] \label{exist} For every $0\leq f \in L^1(B)$,  there exists a unique $0\leq u \in L^1(B)$ which satisfies
$$\int_B u \Delta^2 \phi=\int_B f \phi$$
for all $\phi \in C^4(\bar B) \cap H_0^2(B)$.
\end{thm}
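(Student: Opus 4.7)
The plan is to construct $u$ explicitly via Boggio's Green's function. Let $G(x,y)\geq 0$ denote the Green function for $\Delta^2$ on $B$ with clamped boundary conditions $\phi=\partial_\nu \phi=0$ on $\partial B$; its nonnegativity is Boggio's classical result, which is also what underlies Lemma \ref{boggio}. Define
\[
u(x):=\int_B G(x,y)\,f(y)\,dy.
\]
First I would check $u\in L^1(B)$. By Fubini and the symmetry of $G$, $\|u\|_{L^1}\leq \sup_{y\in B}\!\bigl(\int_B G(x,y)\,dx\bigr)\,\|f\|_{L^1}$, and $y\mapsto \int_B G(x,y)\,dx$ is the (bounded) solution of $\Delta^2 w=1$ in $B$ with $w=\partial_\nu w=0$ on $\partial B$, so $u\in L^1(B)$ and $u\geq 0$ a.e.\ since $G\geq 0$.

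Next I would verify the weak formulation. The key fact is the representation
\[
\phi(y)=\int_B G(x,y)\,\Delta^2\phi(x)\,dx \qquad\text{for all }\phi\in C^4(\bar B)\cap H_0^2(B),
\]
which follows from the defining property of $G$ together with two integrations by parts (the boundary terms vanish thanks to $\phi=\partial_\nu\phi=0$ on $\partial B$). Multiplying by $f(y)$, integrating over $B$, and applying Fubini to the nonnegative integrand yields
\[
\int_B f\,\phi\,dy=\int_B\Delta^2\phi(x)\left(\int_B G(x,y)f(y)\,dy\right)dx=\int_B u\,\Delta^2\phi\,dx,
\]
which is the required identity.

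Finally, uniqueness is immediate from Lemma \ref{boggio}. If $u_1,u_2\in L^1(B)$ are both solutions for the same $f$, then $v:=u_1-u_2\in L^1(B)$ satisfies $\int_B v\,\Delta^2\phi\,dx=0$ for every $0\leq \phi\in C^4(\bar B)\cap H_0^2(B)$, so condition (2) of Lemma \ref{boggio} applies to both $v$ and $-v$, forcing $v\equiv 0$.

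The step I expect to require the most care is justifying the Green's function representation formula for every test function $\phi\in C^4(\bar B)\cap H_0^2(B)$: it is classical for smooth data, but one must know enough about the local behaviour of $G(\cdot,y)$ (symmetry, the fundamental-solution singularity on the diagonal, and the vanishing boundary data in the first variable) to integrate by parts twice against $\phi$ without leaving boundary contributions. Everything else—positivity, integrability, the weak identity and uniqueness—then follows from what is already recorded in the excerpt.
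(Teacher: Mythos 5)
Your proof is correct. Note that the paper offers no argument of its own for this statement: it is quoted verbatim from \cite{AGGM} (their $L^1$--theory for the clamped bilaplacian on the ball), so there is nothing internal to compare against; your Green--function construction is the natural self-contained route and is essentially the one underlying the cited result. The representation $u=\int_B G(\cdot,y)f(y)\,dy$, the identification of $\int_B G(x,\cdot)\,dx$ with the (explicitly computable, bounded) solution of $\Delta^2 w=1$, the representation formula for test functions, and the uniqueness via Lemma \ref{boggio}(2) applied to $\pm v$ are all sound. One small inaccuracy: in the verification of the weak identity you invoke ``Fubini applied to the nonnegative integrand,'' but the integrand $G(x,y)f(y)\Delta^2\phi(x)$ is not nonnegative, since $\Delta^2\phi$ changes sign in general. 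The exchange of integrals is still legitimate because $\Delta^2\phi$ is bounded on $\bar B$ and $\int_B\int_B G(x,y)f(y)\,dx\,dy\le \|w\|_\infty\|f\|_{L^1}<\infty$, so the integrand is absolutely integrable on $B\times B$; you should say this rather than appeal to Tonelli. With that correction, and granting the classical facts about Boggio's Green function that you explicitly flag (symmetry, nonnegativity, the diagonal singularity $|x-y|^{4-N}$ being locally integrable, and the validity of the representation formula for $C^4(\bar B)\cap H_0^2(B)$ data), the argument is complete.
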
  
\noindent We can now introduce the following ``weak iterative scheme": Start with $u_0=U$ and (inductively) let $u_n$, $n \geq 1$, be the solution of
$$\int_B (u_n-\Phi) \Delta^2 \phi=\lambda \int_B  \frac{\phi}{(1-u_{n-1})^2}\qquad\:\forall \: \phi \in C^4(\bar B) \cap H_0^2(B)$$
given by Theorem \ref{exist}. Since $0$ is a sub-solution of $(P)_{\lambda,\alpha,\beta}$, one can easily show inductively by using Lemma \ref{boggio} that $\alpha \leq u_{n+1}\leq u_n \leq U$ for every $n \geq 0$. Since
$$(1-u_n)^{-2}\leq (1-U)^{-2} \in L^1(B),$$
we get by Lebesgue Theorem, that the function $u=\displaystyle \lim_{n \to +\infty} u_n$ is a weak solution of $(P)_{\lambda,\alpha,\beta}$ such that $\alpha \leq u\leq U$. In other words,  the following result holds.  
\begin{thm} \label{super} Assume the existence of a weak super-solution $U$ of $(P)_{\lambda,\alpha,\beta}$. Then there exists a weak solution $u$ of $(P)_{\lambda,\alpha,\beta}$ so that $\alpha \leq u \leq U$ a.e. in $B$. 
\end{thm}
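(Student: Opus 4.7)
The plan is to carry out a monotone iteration starting from the weak super-solution $U$, where at each step one solves a linear biharmonic problem via Theorem \ref{exist}, obtains a decreasing sequence by repeated application of the Boggio-type principle (Lemma \ref{boggio}), and finally passes to the limit using the dominated convergence theorem.

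Concretely, set $u_0:=U$ and for $n\geq 1$ define $u_n$ to be the unique solution furnished by Theorem \ref{exist} of
$$\int_B (u_n-\Phi)\,\Delta^2\phi = \lambda \int_B \frac{\phi}{(1-u_{n-1})^2} \qquad \forall\, \phi \in C^4(\bar B)\cap H_0^2(B).$$
Note this is well-defined provided $(1-u_{n-1})^{-2}\in L^1(B)$, which we will verify along with the iteration. I would then prove by induction that $\alpha \leq u_n \leq u_{n-1}\leq U$ a.e.\ in $B$. For the lower bound, the right-hand side of the defining equation for $u_n$ is nonnegative, so Lemma \ref{boggio}(2) gives $u_n\geq \Phi$; but from the explicit form $\Phi(x)=(\alpha-\beta/2)+(\beta/2)|x|^2$ together with $\beta\leq 0$ one checks $\Phi\geq \alpha$ in $B$, yielding $u_n\geq \alpha$. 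For the monotonicity, if $u_{n-1}\leq u_{n-2}$ then $(1-u_{n-1})^{-2}\leq (1-u_{n-2})^{-2}$, and subtracting the defining equations for $u_n$ and $u_{n-1}$ gives
$$\int_B (u_{n-1}-u_n)\,\Delta^2\phi \geq 0\qquad \forall\,0\leq \phi\in C^4(\bar B)\cap H_0^2(B),$$
so another application of Lemma \ref{boggio}(2) yields $u_n\leq u_{n-1}$. The base case $u_1\leq U$ uses precisely that $U$ is a weak super-solution.

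Having established the monotone bounds, $u(x):=\lim_{n\to\infty} u_n(x)$ exists pointwise a.e.\ and satisfies $\alpha\leq u\leq U$ a.e.; the monotone convergence theorem then gives $u_n\to u$ in $L^1(B)$, which handles the left-hand side of the defining identity. For the right-hand side, the uniform bound $(1-u_n)^{-2}\leq (1-U)^{-2}\in L^1(B)$ (an assumption built into the definition of weak super-solution) together with the Lebesgue dominated convergence theorem allows us to pass to the limit and conclude
$$\int_B (u-\Phi)\,\Delta^2\phi = \lambda \int_B \frac{\phi}{(1-u)^2}\qquad \forall\, \phi\in C^4(\bar B)\cap H_0^2(B),$$
so that $u$ is the desired weak solution.

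The argument is essentially bookkeeping; the only conceptually delicate point is the simultaneous verification, at each iteration step, of the two-sided inclusion $\alpha\leq u_n\leq U<1$ that both justifies the next iterate (by giving the required $L^1$ integrability of $(1-u_n)^{-2}$) and supplies the dominating function needed for the final passage to the limit. Everything else reduces to the comparison principle of Lemma \ref{boggio} applied to differences of consecutive iterates.
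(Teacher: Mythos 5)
Your proposal is correct and follows essentially the same route as the paper: the identical weak iterative scheme starting from $u_0=U$, solving each linear step via Theorem \ref{exist}, establishing $\alpha\leq u_{n+1}\leq u_n\leq U$ by Lemma \ref{boggio}, and passing to the limit with the dominating function $(1-U)^{-2}\in L^1(B)$. Your justification of the lower bound via $u_n\geq\Phi\geq\alpha$ is if anything slightly more explicit than the paper's.
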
  
\noindent In particular,  we can find a weak solution of $(P)_{\lambda,\alpha,\beta}$ for every $\lambda \in (0,\lambda_*)$. Now we show that this is still true for regular weak solutions.

\begin{thm} \label{cch} Let $ (\alpha, \beta)$ be an admissible pair and let $u$ be a weak solution of $(P)_{\lambda,\alpha,\beta}$. Then for every $0<\mu<\lambda$, there is a regular solution for $(P)_{\mu,\alpha,\beta}$.
\end{thm}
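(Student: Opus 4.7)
The plan is to exhibit a weak super-solution of $(P)_{\mu,\alpha,\beta}$ that is uniformly bounded away from $1$, then invoke Theorem \ref{super} together with standard elliptic regularity. The super-solution will be the convex combination of $u$ with the biharmonic extension $\Phi$ from (\ref{Phi}).

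Setting $\tau := \mu/\lambda \in (0,1)$, I would introduce
$$\tilde u := \tau u + (1-\tau)\Phi.$$
By linearity, $\tilde u$ realizes the correct boundary data $(\alpha,\beta)$ on $\partial B$, and $\Delta^2 \tilde u = \tau \Delta^2 u = \mu/(1-u)^2$ in the weak sense. Moreover, applying Lemma \ref{boggio} to $u-\Phi$ exactly as in the paragraph preceding the admissibility definition gives $u \geq \Phi$ a.e. in $B$, whence $\tilde u \leq u$ a.e.

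To check that $\tilde u$ is a weak super-solution of $(P)_{\mu,\alpha,\beta}$, take any $0 \leq \phi \in C^4(\bar B) \cap H_0^2(B)$ and combine the weak formulation of $(P)_{\lambda,\alpha,\beta}$ with the pointwise inequality $\tilde u \leq u$:
$$\int_B (\tilde u - \Phi)\Delta^2 \phi \;=\; \tau \int_B (u - \Phi)\Delta^2 \phi \;=\; \mu \int_B \frac{\phi}{(1-u)^2} \;\geq\; \mu \int_B \frac{\phi}{(1-\tilde u)^2}.$$
The crucial observation is that $\tilde u$ stays uniformly away from $1$: since $u \leq 1$,
$$1 - \tilde u \;=\; \tau(1-u) + (1-\tau)(1-\Phi) \;\geq\; (1-\tau)\bigl(1 - \sup_B \Phi\bigr) \;=\; (1-\tau)\Bigl(1 - \alpha + \tfrac{\beta}{2}\Bigr) \;>\; 0,$$
by the admissibility of $(\alpha,\beta)$.

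Theorem \ref{super} then yields a weak solution $v$ of $(P)_{\mu,\alpha,\beta}$ with $\alpha \leq v \leq \tilde u$, so that $(1-v)^{-2} \in L^\infty(B)$. A standard $L^p$ and Schauder bootstrap for $\Delta^2$ on the ball upgrades $v$ into a $C^4(\bar B)$ classical solution, which is the desired regular solution of $(P)_{\mu,\alpha,\beta}$. The only delicate point is ensuring the super-solution inequality can be phrased at the weak level while $u$ itself is merely a weak solution; this reduces to combining the definition of weak solution for $u$ with $\tilde u \leq u$, and the insertion of $\Phi$ in the convex combination is precisely what makes the boundary data and the identity for $\Delta^2(\tilde u - \Phi)$ line up correctly.
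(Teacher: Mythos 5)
Your proposal is correct and follows essentially the same route as the paper: both form the convex combination $(1-\E)u+\E\Phi$, show it is a super-solution bounded away from $1$ for a smaller parameter, and conclude via Theorem \ref{super}. The only (harmless) difference is that you use the crude monotonicity bound $\tilde u\le u$ to get a super-solution at parameter $(1-\E)\lambda$ directly, while the paper uses the exact identity $(1-\E)(1-u)=1-\bar u+\E(\Phi-1)$ to land at $(1-\E)^3\lambda$; either exponent sweeps out all $\mu\in(0,\lambda)$.
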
 
\begin{proof} Let $ \E\in (0,1)$ be given and let  $ \bar u=(1-\E)u+\E \Phi$, where $\Phi$ is given in (\ref{Phi}). We have that
$$\sup_B \bar u\leq (1-\E)+\E \sup_B \Phi<1\:,\quad \inf_B \bar u\geq (1-\E)\alpha +\E \inf_B \Phi=\alpha,$$
and for every $0\leq \phi \in C^4(\bar B) \cap H_0^2(B)$ there holds:
\begin{eqnarray*}
\int_B (\bar u-\Phi) \Delta^2 \phi &=& (1-\E) \int_B (u-\Phi)\Delta^2 \phi
= (1-\E)\lambda \int_B \frac{\phi}{(1-u)^2}\\
&=& (1-\E)^3 \lambda \int_B \frac{\phi}{(1-\bar u+\E (\Phi-1))^2} \geq
(1-\E)^3 \lambda \int_B \frac{\phi}{(1-\bar u)^2}.
\end{eqnarray*} 
Note that $ 0 \le (1-\E)(1-u)=1 - \bar{u}+\E (\Phi -1) <1-\bar u$. So $ \bar{u}$ is a weak super-solution of $ (P)_{ (1-\E)^3 \lambda, \alpha , \beta}$ satisfying $\displaystyle \sup_B \bar u<1$. From Theorem \ref{super} we get the existence of a weak solution $w$ of $ (P)_{ (1-\E)^3 \lambda, \alpha , \beta}$ so that $\alpha \leq w\leq \bar u$. In particular, $\displaystyle \sup_B w<1$ and $w$ is a regular weak solution. Since $\E \in (0,1)$ is arbitrarily chosen, the proof is complete.
\end{proof} 
\noindent Theorem \ref{cch} implies in particular the existence of a regular weak solution $U_\lambda$ for every $\lambda \in (0,\lambda_*)$. Introduce now a ``classical" iterative scheme: $u_0=0$ and (inductively) $u_n=v_n +\Phi$, $n \geq 1$, where $v_n \in H_0^2(B)$ is the (radial) solution of
\begin{equation} \label{pranzo}
\Delta^2 v_n=\Delta^2(u_n-\Phi)= \frac{\lambda}{(1-u_{n-1})^2} \qquad\hbox{in }B.
\end{equation}
Since $v_n \in H_0^2(B)$, $u_n$ is also a weak solution of (\ref{pranzo}), and by Lemma \ref{boggio} we know that $\alpha \leq u_n\leq u_{n+1} \leq U_\lambda$ for every $n \geq 0$. Since $\displaystyle \sup_B u_n \leq \displaystyle \sup_B U_\lambda<1$ for $n\geq 0$, we get that $(1-u_{n-1})^{-2} \in L^2(B)$ and the existence of $v_n$ is guaranteed. Since $v_n$ is easily seen to be uniformly bounded in $H_0^2(B)$, we have that $u_\lambda:=\displaystyle \lim_{n \to +\infty}u_n$ does hold pointwise and weakly in $H^2(B)$. By Lebesgue Theorem, we have that $u_\lambda$ is a radial weak solution of $(P)_{\lambda,\alpha,\beta}$ so that $\displaystyle \sup_B u_\lambda\leq \displaystyle \sup_B U_\lambda<1$. By elliptic regularity theory \cite{ADN} $u_\lambda \in C^\infty(\bar B)$ and $u_\lambda-\Phi=\partial_\nu(u_\lambda-\Phi)=0$ on $\partial B$. So we can integrate by parts to get
$$\int_B \Delta^2 u_\lambda \phi =\int_B \Delta^2(u_\lambda-\Phi) \phi=\int_B (u_\lambda-\Phi)\Delta^2 \phi=\lambda \int_B \frac{\phi}{(1-u_\lambda)^2}$$
for every $\phi \in C^4(\bar B) \cap H_0^2(B)$. Hence, $u_\lambda$ is a radial classical solution of $(P)_{\lambda,\alpha,\beta}$ showing that $\lambda^*=\lambda_*$. Moreover, since $\Phi$ and $v_\lambda:=u_\lambda-\Phi$ are radially decreasing in view of \cite{Sor}, we get that $u_\lambda$ is radially decreasing too. Since the argument above shows that $u_\lambda<U$ for any other classical solution $U$ of $(P)_{\mu,\alpha,\beta}$ with $\mu \geq \lambda$, we have that $u_\lambda$ is exactly the minimal solution and $u_\lambda$ is strictly increasing as $\lambda \uparrow \lambda^*$. In particular, we can define $ u^*$ in the usual way: $ u^*(x)= \displaystyle \lim_{\lambda \nearrow \lambda^*} u_\lambda(x)$. 

\medskip \noindent Finally, we show the finiteness of the pull-in voltage. 
\begin{thm} If $ (\alpha, \beta)$ is an admissible pair, then $\lambda^*(\alpha, \beta) <+\infty$. 
\end{thm}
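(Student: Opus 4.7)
My plan is to combine Boggio's principle (Lemma \ref{boggio}) with a comparison against a fixed positive biharmonic potential to produce a pointwise lower bound on any classical solution $u_\lambda$ of $(P)_{\lambda,\alpha,\beta}$; the constraint $u_\lambda<1$ will then force $\lambda$ to be bounded.

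First, I would introduce the unique function $v \in C^4(\overline{B}) \cap H_0^2(B)$ solving
$$\Delta^2 v = 1 \text{ in } B, \qquad v = \partial_\nu v = 0 \text{ on } \partial B,$$
and note by Lemma \ref{boggio} that $v>0$ throughout $B$, so $M := \max_B v > 0$. Next, given any classical solution $u_\lambda$ of $(P)_{\lambda,\alpha,\beta}$, I would apply Lemma \ref{boggio} to $u_\lambda - \Phi$, which lies in $C^4(\overline{B}) \cap H_0^2(B)$, satisfies $\Delta^2(u_\lambda-\Phi) = \lambda/(1-u_\lambda)^2 \geq 0$, and vanishes together with its normal derivative on $\partial B$, to conclude $u_\lambda \geq \Phi$ in $B$. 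Since $\beta \leq 0$, the paraboloid $\Phi(x)=(\alpha-\frac{\beta}{2})+\frac{\beta}{2}|x|^2$ attains its minimum on $\partial B$, giving $u_\lambda \geq \inf_B \Phi = \alpha$.

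With this lower bound, $(1-u_\lambda)^{-2} \geq (1-\alpha)^{-2}$ (note $\alpha<1$ by admissibility, since $\alpha \leq \alpha-\frac{\beta}{2} < 1$), so the auxiliary function
$$w := u_\lambda - \Phi - \frac{\lambda}{(1-\alpha)^2}\, v$$
lies in $C^4(\overline{B}) \cap H_0^2(B)$ and satisfies $\Delta^2 w \geq 0$ in $B$. A second application of Boggio's principle yields $w \geq 0$, i.e.,
$$u_\lambda(x) \geq \Phi(x) + \frac{\lambda}{(1-\alpha)^2}\, v(x) \qquad \text{for every } x \in B.$$
Evaluating at a point $x_0 \in B$ with $v(x_0)=M$, and using $u_\lambda(x_0) < 1$ together with $\Phi(x_0) \geq \alpha$, I would deduce $\lambda < (1-\alpha)^3/M$; letting $\lambda \nearrow \lambda^*(\alpha,\beta)$ gives $\lambda^*(\alpha,\beta) \leq (1-\alpha)^3/M < +\infty$.

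The only (mild) subtlety is that when $\alpha<0$ one cannot simply bound $(1-u_\lambda)^{-2}$ below by $1$; this is why establishing first that $u_\lambda \geq \alpha$ and absorbing the factor $(1-\alpha)^{-2}$ into the comparison function is necessary. Apart from this, the argument is a routine double application of Boggio's principle and requires no delicate estimates.
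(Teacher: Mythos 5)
Your argument is correct, and it takes a genuinely different route from the paper's. The paper tests the equation against the first eigenfunction $\psi>0$ of $\Delta^2$ in $H_0^2(B)$: integrating by parts produces a constant $C$ from the boundary data $(\alpha,\beta)$ and the identity $\int_B \bigl(\tfrac{\lambda}{(1-u)^2}-\nu_1 u - C\bigr)\psi=0$, whence the integrand must be nonpositive somewhere and $\lambda \le \sup_{\alpha<s<1}(\nu_1 s + C)(1-s)^2$. You instead run a pure comparison argument: after noting $u_\lambda\ge\Phi\ge\alpha$ (which is in fact already built into the definition of a classical solution of $(P)_{\lambda,\alpha,\beta}$), you bound the nonlinearity below by the constant $\lambda(1-\alpha)^{-2}$ and apply Boggio's principle to $u_\lambda-\Phi-\lambda(1-\alpha)^{-2}v$, where $v$ is the clamped solution of $\Delta^2 v=1$; evaluating at the interior maximum of $v$ and using $u_\lambda<1$ gives $\lambda<(1-\alpha)^3/M$. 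Both proofs ultimately rest on the positivity preserving property of $\Delta^2$ on the ball (directly via Lemma~\ref{boggio} in your case, via positivity of the first eigenfunction in the paper's), and both yield explicit, if different, upper bounds for $\lambda^*$: yours is arguably more elementary in that it avoids the eigenpair $(\psi,\nu_1)$ and the boundary-term bookkeeping, at the cost of introducing the auxiliary torsion-type function $v$; the paper's eigenfunction technique is the one that generalizes most readily to settings where one only has integral (weak-solution) information rather than pointwise comparison. All the steps you flag as routine do check out: $\alpha\le\alpha-\tfrac{\beta}{2}<1$ gives $\alpha<1$, $\Phi$ is radially nonincreasing since $\beta\le0$ so $\inf_B\Phi=\alpha$, and $\max_B v>0$ is attained in the interior since $v=0$ on $\partial B$.
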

\begin{proof} Let $u$ be a classical solution of $ (P)_{\lambda, \alpha, \beta}$  and let $ (\psi, \nu_1)$ denote the first eigenpair of $ \Delta^2$ in $H_0^2(B)$ with $ \psi >0$.  Now, let $ C $ be such that 
\[ \int_{\partial B} (\beta \Delta \psi - \alpha \partial_\nu \Delta \psi) = C \int_B \psi. \] 
Multiplying $ (P)_{\lambda,\alpha,\beta}$ by $ \psi$ and then integrating by parts one arrives at 
\[ \int_B \left( \frac{ \lambda}{(1-u)^2} - \nu_1 u -C \right) \psi =0. \] 
Since $ \psi>0$ there must exist a point $\bar x \in B$ where  
$\frac{ \lambda}{(1-u(\bar x))^2} - \nu_1 u(\bar x) -C \le 0.$
Since $\alpha<u(\bar x)<1$, one can conclude that 
$ \lambda \le \sup_{\alpha< u <1} ( \nu_1 u +C)(1-u)^2$,  
which shows that $ \lambda^*<+\infty$.
\end{proof}  
\noindent The following summarizes what we have shown so far.

\begin{thm} If $(\alpha,\beta)$ is an admissible pair, then $\lambda^* \in (0,+\infty)$ and the following hold:
\begin{enumerate}
\item For each $ 0 < \lambda < \lambda^*$ there exists a classical, minimal solution $u_\lambda$ of $(P)_{\lambda,\alpha,\beta}$.  Moreover $ u_\lambda $ is radial and radially decreasing.
\item For each $ x \in B$ the map $ \lambda \mapsto u_\lambda(x)$ is strictly increasing on $ (0,\lambda^*)$.
\item For $ \lambda > \lambda^*$ there are no weak solutions of $(P)_{\lambda,\alpha,\beta}$.
\end{enumerate}
\label{quasi} \end{thm}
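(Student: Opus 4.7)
The plan is to assemble the ingredients developed earlier in the section into a single argument: finiteness of $\lambda^*$ is the preceding theorem, and positivity follows at once from the Implicit Function Theorem applied for small $\lambda$, so the substantive content is assertions (1)--(3) for $\lambda$ ranging over $(0,\lambda^*)$.

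For assertion (1) I would fix $\lambda<\lambda^*$, choose $\mu\in(\lambda,\lambda^*)$ admitting a classical solution, and obtain from Theorem \ref{cch} a regular weak solution $U_\lambda$ of $(P)_{\lambda,\alpha,\beta}$ with $\sup_B U_\lambda<1$. This is exactly the setting for the ``classical iterative scheme'' already displayed in the text: with $u_0=0$ and $u_n=v_n+\Phi$, the iterates $v_n\in H_0^2(B)$ solve $\bi v_n=\lambda/(1-u_{n-1})^2$, Lemma \ref{boggio} forces $\alpha\le u_n\le u_{n+1}\le U_\lambda$ inductively, and the bound $\sup_B U_\lambda<1$ keeps the right-hand sides uniformly in $L^2(B)$. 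The monotone $H^2$-weak limit $u_\lambda$ is then a weak solution with $\sup_B u_\lambda<1$, and elliptic regularity \cite{ADN} upgrades it to a classical solution of $(P)_{\lambda,\alpha,\beta}$. Radial symmetry is preserved through the iteration because $\Phi$ and Boggio's Green function are both radial; radial monotonicity is obtained by applying \cite{Sor} to each $v_n$ (whose right-hand side is nonnegative and radially non-increasing) together with the fact that $\Phi$ is radially non-increasing when $\beta\le 0$.

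For minimality and for (2) I would use the iteration as a comparison tool: any classical solution $U$ of $(P)_{\mu,\alpha,\beta}$ with $\mu\ge\lambda$ is automatically a classical super-solution of $(P)_{\lambda,\alpha,\beta}$, and Lemma \ref{boggio} applied at each step forces $u_n\le U$, hence $u_\lambda\le U$. The choice $\mu=\lambda$ yields minimality and $\mu>\lambda$ yields the weak monotonicity $u_\lambda\le u_\mu$. Strict pointwise monotonicity then comes from the ``moreover'' clause of Lemma \ref{boggio}: $w=u_\mu-u_\lambda$ satisfies $\bi w\ge 0$ in $B$ with homogeneous Dirichlet data, so either $w\equiv 0$ or $w>0$ a.e., and the identity would force $\mu/(1-u_\mu)^2=\lambda/(1-u_\lambda)^2$, contradicting $\mu>\lambda$.

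Assertion (3) is the equality $\lambda^*=\lambda_*$ already recorded in the text following Theorem \ref{cch}: any weak solution at some level $\lambda_0>\lambda^*$ would, via Theorem \ref{cch} followed by the classical iteration just described, produce a classical solution at every $\mu<\lambda_0$, contradicting the definition of $\lambda^*$. The main obstacle I anticipate is the radial monotonicity of $u_\lambda$: symmetry propagates freely through the iteration, but monotonicity depends on the external input \cite{Sor} and on the admissibility condition $\beta\le 0$ to keep $\Phi$ radially non-increasing; everything else is essentially an assembly of Lemma \ref{boggio}, Theorem \ref{cch}, and standard elliptic regularity.
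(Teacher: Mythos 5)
Your proposal is correct and follows essentially the same route as the paper, which itself presents this theorem as a summary of the section: finiteness from the preceding eigenfunction argument, positivity from the Implicit Function Theorem, existence/minimality/monotonicity from Theorem \ref{cch} combined with the classical iterative scheme, Lemma \ref{boggio}, elliptic regularity \cite{ADN} and \cite{Sor}, and assertion (3) from the identity $\lambda^*=\lambda_*$. The only cosmetic difference is that you apply \cite{Sor} to each iterate $v_n$ rather than to the limit $v_\lambda=u_\lambda-\Phi$ as the paper does, which changes nothing.
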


\subsection{Stability of the minimal branch of solutions} 
This section is devoted to the proof of the following stability result for minimal solutions.  We shall need yet another notion of $H^2(B)-$weak solutions, which is an intermediate class between classical and weak solutions.

\begin{dfn} We say that $u$ is a $H^2(B)-$weak solution of $(P)_{\lambda,\alpha,\beta}$ if $u -\Phi \in H_0^2(B)$, $ \alpha \le u \le 1$ a.e. in $B$, $ \frac{1}{(1-u)^2} \in L^1(B)$ and if
\[ \int_B \Delta u \Delta \phi = \lambda \int_B \frac{\phi}{(1-u)^2}, \qquad \forall \phi \in C^4(\bar B) \cap H_0^2(B),\] 
where $\Phi$ is given in (\ref{Phi}). We say that $u$ is a $H^2(B)-$weak super-solution (resp. $H^2(B)-$weak sub-solution) of $(P)_{\lambda,\alpha,\beta}$ if for $ \phi \ge 0$ the equality is replaced with $ \ge$ (resp. $ \le $) and $u\geq \alpha$ (resp. $\leq$), $\partial_\nu u \leq \beta$ (resp. $\geq$) on $\partial B$.
\end{dfn}  

\begin{thm} \label{stable} Suppose $ (\alpha,\beta)$ is an admissible pair.  
\begin{enumerate}
\item  The minimal solution $ u_\lambda $ is then stable and is the unique semi-stable $H^2(B)-$weak solution of $(P)_{\lambda,\alpha,\beta}$. 
\item The function $ u^*:= \displaystyle \lim_{\lambda \nearrow \lambda^*} u_\lambda$ is a well-defined semi-stable $H^2(B)-$weak solution of  $(P)_{\lambda^*,\alpha,  \beta}$.
\item When $u^*$ is classical solution, then $\mu_1(u^*)=0$ and $u^*$ is the unique $H^2(B)-$weak solution of $(P)_{\lambda^*,\alpha,\beta}$.
\item If $ v$ is a singular, semi-stable $H^2(B)-$weak solution of $ (P)_{ \lambda, \alpha, \beta}$, then $ v=u^*$ and $ \lambda = \lambda^*$
\end{enumerate}
\end{thm}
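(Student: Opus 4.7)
The plan is to prove all four assertions together using three common ingredients: the positivity-preserving property of $\bi$ on $B$ from Lemma~\ref{boggio}, the strict convexity of $f(t):=(1-t)^{-2}$ on $(-\infty,1)$, and the semi-stability inequality itself. For the stability assertion in (1), I would argue by contradiction: suppose $\mu_1(u_{\la_0})=0$ for some $\la_0\in(0,\la^*)$, and let $\phi_0>0$ be the corresponding first eigenfunction, whose positivity on the ball follows from Lemma~\ref{boggio} applied to the inverse of the linearized operator. For $\la\in(\la_0,\la^*)$, set $w:=u_\la-u_{\la_0}\in H_0^2(B)$ (boundary data cancel), and note $w\ge 0$ by Lemma~\ref{boggio}. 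Strict convexity of $f$ gives
\[ \bi w \ge 2\la(1-u_{\la_0})^{-3}w + (\la-\la_0)(1-u_{\la_0})^{-2}. \]
Testing against $\phi_0$ and integrating by parts via $\bi\phi_0 = 2\la_0(1-u_{\la_0})^{-3}\phi_0$ produces $2(\la_0-\la)\int_B w\phi_0(1-u_{\la_0})^{-3} \ge (\la-\la_0)\int_B \phi_0(1-u_{\la_0})^{-2}$, whose two sides have strictly opposite signs: contradiction.

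For the uniqueness part of (1) and most of (4), let $v$ be any semi-stable $H^2(B)$-weak solution of $(P)_{\la,\alpha,\beta}$. The monotone iteration of Theorem~\ref{super} started from $\Phi$ and bounded above by the super-solution $v$ converges to a weak solution $\le v$ which, by Theorem~\ref{quasi}, must equal $u_\la$, so $v\ge u_\la$. Setting $w:=v-u_\la\in H_0^2(B)$, the weak equation combined with convexity gives
\[ \int_B (\Delta w)^2 = \la\int_B w\bigl[f(v)-f(u_\la)\bigr] \le \la\int_B f'(v)w^2, \]
while semi-stability of $v$ tested against $\phi=w$ yields the reverse inequality. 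Equality combined with strict convexity forces $v=u_\la$. In particular a singular semi-stable solution at $\la<\la^*$ would coincide with the classical $u_\la$, which is impossible; since $\la>\la^*$ is excluded by Theorem~\ref{quasi}, the case $\la=\la^*$ is the only possibility in (4).

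For (2), testing the semi-stability of $u_\la$ against $\phi:=u_\la-\Phi\in H_0^2(B)$ and comparing with the identity $\int_B(\Delta(u_\la-\Phi))^2=\la\int_B(u_\la-\Phi)(1-u_\la)^{-2}$ produces a uniform $H^2$-bound on $u_\la-\Phi$ together with a uniform $L^3$-bound on $(1-u_\la)^{-1}$; combined with monotonicity in $\la$, this lets $u^*$ be defined pointwise and weakly as an $H^2(B)$-weak solution of $(P)_{\la^*,\alpha,\beta}$, and Fatou transfers semi-stability to the limit. For (3), $\mu_1(u^*)\ge 0$ by (2); if $u^*$ is classical and $\mu_1(u^*)>0$, the implicit function theorem would extend the branch strictly past $\la^*$, contradicting its definition, so $\mu_1(u^*)=0$. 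Uniqueness of the $H^2(B)$-weak solution $v$ at $\la^*$ then follows from $v\ge u^*$ together with $\bi(v-u^*)\ge \la^* f'(u^*)(v-u^*)$ tested against the positive first eigenfunction of $\bi-2\la^*(1-u^*)^{-3}$; strict convexity of $f$ forces $v=u^*$. The case $\la=\la^*$ of (4) is handled by the same convexity-plus-semi-stability argument applied to $v$ and $u^*$, the comparison $v\ge u^*$ being obtained by letting $\mu\uparrow\la^*$ in $v\ge u_\mu$.

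The main obstacle I anticipate is the positivity of the first eigenfunction $\phi_0$ of $\bi-2\la(1-u_\la)^{-3}$ on $H_0^2(B)$: for clamped biharmonic problems this is not automatic, but on the ball it can be extracted from Lemma~\ref{boggio} via positivity of the associated Green's function. A secondary, more routine, difficulty is tightening the uniform bounds needed to pass cleanly to the limit in (2); beyond these the argument is entirely driven by the convexity-and-semi-stability machinery above, in close parallel with \cite{DDGM}.
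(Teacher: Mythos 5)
Your argument is correct in substance and runs on the same machinery as the paper (convexity of $f$, semi-stability tested against a suitable nonnegative function, Boggio's principle), but it is organized differently. The paper routes everything through the comparison Lemma~\ref{shi}: there the sign of $w=u-U$ is \emph{not} known a priori, so one must invoke Moreau's dual-cone decomposition $w=w_1+w_2$ and test stability against the nonnegative piece $w_1$. You sidestep Moreau entirely by first establishing $v\ge u_\la$ (resp. $v\ge u^*$) from minimality of the iterative scheme, so that $w=v-u_\la\ge 0$ can be used directly as the test function; this is a legitimate simplification whenever one of the two functions being compared is the minimal (or extremal) solution, which covers all of (1), (3) and (4). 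Your proof of stability in (1) is also a different packaging: a direct test of $u_\la-u_{\la_0}$ against the first eigenfunction at a putative degenerate $\la_0$, versus the paper's use of Lemma~\ref{shi}(ii) with $U=u_\mu$, $\mu>\la_{**}$. Both work; note only that ruling out $\mu_1(u_{\la_0})=0$ yields $\mu_1>0$ everywhere only after you add that $\la\mapsto\mu_1(u_\la)$ is continuous and positive for small $\la$, which the paper states explicitly. Parts (2) and the $\mu_1(u^*)=0$ claim in (3) coincide with the paper's Lemma~\ref{extremalsol} and the Implicit Function Theorem argument.

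The one genuine gap is the positivity of the first eigenfunction $\phi_0$ of $\bi-2\la(1-u_\la)^{-3}$ in $H_0^2(B)$, which you correctly flag as the main obstacle but then propose to resolve by ``Lemma~\ref{boggio} applied to the inverse of the linearized operator.'' That does not work: Boggio's principle gives positivity of the Green function of $\bi$ itself, not of $\bi-V$ for a positive potential $V$, and positivity preservation is not stable under such perturbations in the clamped setting. The correct mechanism (used in the proof of Lemma~\ref{shi}(ii), following \cite{DDGM}) is again Moreau's decomposition: write the minimizer as $\phi=\phi_1+\phi_2$ with $\phi_1\ge 0$, $\bi\phi_2\le 0$ weakly and $\int_B\Delta\phi_1\Delta\phi_2=0$; if $\phi$ changed sign, then $\phi_1-\phi_2$ would have the same $\int(\Delta\cdot)^2$ but a strictly larger $\int f'(u)(\cdot)^2$ and $L^2$ norm, giving a Rayleigh quotient strictly below $\mu_1$, a contradiction. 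With this ingredient supplied, your proof closes; without it, the sign argument in your step (1) and the eigenfunction test in your step (3) are unsupported.
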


\noindent The crucial tool is a comparison result which is valid exactly in this class of solutions.
\begin{lemma} \label{shi} Let $ (\alpha, \beta)$ be an admissible pair and $u$ be a semi-stable $H^2(B)-$weak solution of $(P)_{\lambda, \alpha, \beta}$. Assume $ U $ is a $H^2(B)-$weak super-solution of $(P)_{\lambda, \alpha, \beta}$ so that $U-\Phi \in H_0^2(B)$. Then 
\begin{enumerate}
\item $ u \le U$ a.e. in $B$;
\item If $ u$ is a classical solution and $ \mu_1(u)=0$ then $ U=u$.
\end{enumerate}
\end{lemma}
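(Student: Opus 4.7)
The approach I would take is the standard convexity–plus–semi-stability argument for comparison results of this kind, combined with the positivity-preserving machinery coming from Boggio's principle (Lemma \ref{boggio}).

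Set $w := U - u$. Since both $u - \Phi$ and $U - \Phi$ lie in $H_0^2(B)$, so does $w$. The function $g(s) := (1-s)^{-2}$ is strictly convex on $[\alpha,1)$, hence
$$g(U) - g(u) \,\geq\, g'(u)(U-u) \,=\, \frac{2w}{(1-u)^3},$$
with equality iff $U = u$. Subtracting the defining identity for $u$ from the super-solution inequality for $U$, tested against $\psi \in C^4(\bar B)\cap H_0^2(B)$ with $\psi \geq 0$, and then extending by density to all non-negative $\psi \in H_0^2(B)$ (the right-hand side being finite because semi-stability of $u$ implies $\int_B \phi^2/(1-u)^3 \leq (2\lambda)^{-1} \int_B (\Delta \phi)^2$ for every $\phi \in H_0^2(B)$), I obtain the key linearized inequality
$$\int_B \Delta w\,\Delta \psi \,\geq\, 2\lambda \int_B \frac{w\,\psi}{(1-u)^3}\qquad \forall\,\psi\in H_0^2(B),\ \psi\geq 0. \tag{$\star$}$$

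For Part (1), I would argue by contradiction, assuming $w^- := \max(-w,0) \not\equiv 0$. Since $\alpha \leq u, U \leq 1$ a.e.\ we have $w \in L^\infty(B)$, so $w^- \in L^\infty(B)$. Apply Theorem \ref{exist} to get $\eta \in H_0^2(B)$ solving $\Delta^2 \eta = w^-$; Boggio's principle gives $\eta \geq 0$, and elliptic regularity puts $\eta \in H^4(B)$. Testing $(\star)$ with $\psi = \eta$ and integrating by parts turns $\int_B \Delta w\,\Delta\eta$ into $\int_B w\,w^- = -\int_B (w^-)^2$, yielding
$$\int_B (w^-)^2 + 2\lambda \int_B \frac{w^+\eta - w^-\eta}{(1-u)^3} \,\leq\, 0. \tag{$\diamond$}$$
Testing semi-stability with $\phi = \eta$ gives, after integration by parts, $\int_B \eta\,w^- \geq 2\lambda \int_B \eta^2/(1-u)^3$. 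Applying Cauchy--Schwarz to $\int_B w^-\eta/(1-u)^3$ in $L^2((1-u)^{-3}dx)$ and combining with this lower bound on $\int \eta^2/(1-u)^3$ leads to a chain of inequalities incompatible with $(\diamond)$ unless $w^- \equiv 0$.

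For Part (2), assume in addition that $u$ is classical and $\mu_1(u) = 0$. Part (1) already yields $u \leq U$. Since $u$ is classical, the weight $(1-u)^{-3}$ is bounded, the linearized operator $L_u := \Delta^2 - 2\lambda/(1-u)^3$ is well-defined on $H_0^2(B)$, and the infimum defining $\mu_1(u)$ is attained by some $\varphi_1 \in H_0^2(B)$ which, via the Krein--Rutman / Boggio-type positivity preserving property of $\Delta^2$ on the ball with clamped data, may be taken strictly positive. Testing $(\star)$ with $\psi = \varphi_1$ and using $\Delta^2 \varphi_1 = 2\lambda\varphi_1/(1-u)^3$ gives
$$0 \,=\, \int_B \varphi_1\,L_u w \,=\, \lambda \int_B \varphi_1 \Big[g(U) - g(u) - g'(u)(U-u)\Big] \,\geq\, 0,$$
so the non-negative integrand vanishes a.e.; strict convexity of $g$ combined with $\varphi_1 > 0$ then forces $U = u$ a.e.

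The hard part will be closing the contradiction in Part (1): one has to handle the case where $u$ is singular (so that $(1-u)^{-3}$ is unbounded), and to verify carefully that the Cauchy--Schwarz chain, together with the identity $\int_B \eta w^- = \int_B (\Delta \eta)^2$ and the semi-stability bound, indeed forces $\int_B (w^-)^2 = 0$. An alternative would be to establish a direct positivity-preserving principle for $L_u$ on $B$ by inverting $\Delta^2$ via Boggio's kernel and iterating, but the degenerate case $\mu_1(u) = 0$ requires extra care.
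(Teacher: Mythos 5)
Your Part (2) is essentially sound and close to the paper's own argument (test the linearization against a positive first eigenfunction and invoke strict convexity), modulo the fact that you only cite, rather than prove, the positivity of the eigenfunction --- the paper establishes it via the same decomposition discussed below. The genuine problem is Part (1), where you yourself flag that ``the hard part will be closing the contradiction,'' and indeed as set up it does not close. Testing $(\star)$ with $\eta=\Delta^{-2}(w^-)$ gives $\int_B(w^-)^2\le 2\lambda\int_B \frac{w^-\eta}{(1-u)^3}$, and your Cauchy--Schwarz step bounds the right side by $2\lambda\left(\int_B\frac{(w^-)^2}{(1-u)^3}\right)^{1/2}\left(\int_B\frac{\eta^2}{(1-u)^3}\right)^{1/2}$. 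The second factor is controlled by semi-stability, but the first factor involves $\int_B(w^-)^2(1-u)^{-3}$, over which you have no a priori control when $u$ is singular (only $(1-u)^{-2}\in L^1$ and $\int_B\phi^2(1-u)^{-3}<\infty$ for $\phi\in H_0^2$ are available), and which cannot be compared to $\int_B(w^-)^2$. The root of the difficulty is that your $\eta$ satisfies $\int_B\Delta w\,\Delta\eta=-\int_B(w^-)^2$, which is the wrong pairing: to exploit semi-stability without loss you need a nonnegative function $w_1$ that simultaneously dominates $u-U$ pointwise \emph{and} satisfies the energy identity $\int_B\Delta(u-U)\,\Delta w_1=\int_B(\Delta w_1)^2$.

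That is exactly what the Moreau decomposition of $H_0^2(B)$ with respect to dual cones provides, and it is the missing idea. One writes $u-U=w_1+w_2$ with $w_1\ge 0$ a.e., $\Delta^2 w_2\le 0$ in the weak sense (hence $w_2\le 0$ by Boggio's principle) and $\int_B\Delta w_1\,\Delta w_2=0$. Then $w_1\ge u-U$ and $\int_B\Delta(u-U)\,\Delta w_1=\int_B(\Delta w_1)^2$, so chaining semi-stability tested on $w_1$ with the difference of the equation and the super-solution inequality tested on $w_1$ yields $\int_B\big(f(u)-f(U)-f'(u)(u-U)\big)w_1\ge 0$ with $f(s)=(1-s)^{-2}$; strict convexity of $f$ then forces $u\le U$ a.e. Without this dual-cone projection (or a genuine positivity-preserving principle for the linearized operator, which you mention as an alternative but do not supply, and which is delicate precisely in the degenerate case $\mu_1(u)=0$), Part (1) remains unproved.
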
 
\begin{proof}  (i)  Define $ w:= u-U$.   Then by the Moreau decomposition \cite{M} for the biharmonic operator,  there exist $ w_1,w_2 \in H_0^2(B)$, with $ w=w_1 + w_2$, $ w_1 \ge 0$ a.e., $\Delta^2 w_2 \le 0 $ in the $H^2(B)-$weak sense and $\int_B \Delta w_1 \Delta w_2=0$.  By Lemma \ref{boggio}, we have that $w_2 \le 0$ a.e. in $B$.\\  
Given now $ 0 \le \phi \in C_c^\infty(B)$, we have that
\[ \int_B \Delta w \Delta \phi \leq  \lambda \int_B (f(u) - f(U)) \phi, \] 
where $ f(u):= (1-u)^{-2}$.  Since $ u$ is semi-stable, one has 
\begin{eqnarray*} 
\lambda \int_B f'(u) w_1^2 \le  \int_B (\Delta w_1)^2 
= \int_B \Delta w \Delta w_1 \le  \lambda \int_B ( f(u) - f(U)) w_1.
\end{eqnarray*} 
Since $ w_1 \ge w$ one also has 
\[ \int_B f'(u) w w_1 \le \int_B (f(u)-f(U)) w_1,\]  
which once re-arranged gives 
\[ \int_B \tilde{f} w_1 \geq 0,\] 
where $ \tilde{f}(u)= f(u) - f(U) -f'(u)(u-U)$. The strict convexity of $f$ gives $ \tilde{f} \le 0$ and $ \tilde{f}< 0 $ whenever $u \not= U$. Since $w_1 \ge 0$ a.e. in $B$ one sees that $ w \le 0 $ a.e.  in $B$. The inequality $ u \le U$  a.e. in $B$ is then established. 

\medskip \noindent (ii)  Since $u$ is a classical solution, it is easy to see that the infimum in $\mu_1(u)$ is attained at some $\phi$. The function $\phi$ is then the first eigenfunction of $\Delta^2-\frac{2\lambda}{(1-u)^3}$ in $H_0^2(B)$. Now we show that $ \phi$ is of fixed sign.  Using the above decomposition, one has $ \phi= \phi_1 + \phi_2$ where $ \phi_i \in H_0^2(B)$ for $i=1,2$, $ \phi_1 \ge 0$, $ \int_B \Delta \phi_1 \Delta \phi_2=0$ and $ \Delta^2 \phi_2 \le 0$ in the $H^2_0(B)-$weak sense. If $ \phi$ changes sign,  then $ \phi_1 \not\equiv 0$ and $ \phi_2 <0$ in $B$ (recall that either $\phi_2<0$ or $\phi_2=0$ a.e. in $B$). We can write now:
\begin{eqnarray*}
0 = \mu_1(u) 
\le  \frac{ \int_B (\Delta (\phi_1 -\phi_2))^2 - \lambda f'(u) ( \phi_1 - \phi_2)^2}{ \int_B ( \phi_1 - \phi_2)^2} < \frac{ \int_B ( \Delta \phi)^2 - \lambda f'(u) \phi^2 }{ \int_B \phi^2} =\mu_1(u)
\end{eqnarray*} 
in view of $\phi_1 \phi_2<-\phi_1\phi_2$ in a set of positive measure, leading to a contradiction.\\  
So we can assume $ \phi \ge 0$, and by the Boggi's principle we have $\phi>0$ in $B$. For $ 0 \le t \le 1$ define 
$$g(t)=\int_B \Delta \left[t U+(1-t)u \right] \Delta \phi
- \lambda \int_B f( tU+(1-t)u) \phi,$$  
where $\phi$ is the above first eigenfunction.
Since $ f$ is convex one sees that 
$$g(t)\geq \lambda \int_B \left[t f(U)+(1-t)f(u)-f(tU+(1-t)u)\right]\phi \geq 0$$  for every $t \geq 0$. Since $ g(0) =0$ and 
$$ g'(0)= \int_B \Delta (U-u) \Delta \phi-\lambda f'(u)(U-u)\phi=0 ,$$  
we get that
\[ g''(0)=- \lambda \int_B f''(u) (U-u)^2 \phi\geq 0.\] 
Since $f''(u)\phi>0$ in $B$, we finally get that $ U=u$ a.e. in $B$. 
\end{proof} 

\noindent Based again on Lemma \ref{boggio}(3), we can show a more general version of the above Lemma \ref{shi}.
\begin{lemma} \label{poo} Let $ (\alpha,\beta)$ be an admissible pair and $\beta'\leq 0$. Let $u$ be a semi-stable $H^2(B)-$weak sub-solution of $(P)_{\lambda, \alpha,\beta}$ with $u=\alpha$, $\partial_\nu u=\beta' \geq \beta$ on $\partial B$. Assume that $U$ is a $H^2(B)-$weak super-solution of $(P)_{\lambda, \alpha ,\beta}$ with $U=\alpha$, $\partial_\nu U=\beta$ on $\partial B$. Then $ U \ge u$ a.e. in $B$. 
\end{lemma}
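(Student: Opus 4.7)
The plan is to follow the strategy of Lemma \ref{shi}(i), modified to absorb the boundary mismatch $\partial_\nu u=\beta'\geq \beta=\partial_\nu U$ on $\partial B$. Set $w:=u-U$; then $w=0$ on $\partial B$ while $\partial_\nu w=\beta'-\beta\geq 0$, so in general $w\in H^2(B)$ fails to lie in $H_0^2(B)$ and the Moreau decomposition is not directly available. To remedy this, introduce the radial biharmonic corrector
$$\Psi(x):=\frac{\beta-\beta'}{2}\bigl(|x|^2-1\bigr),$$
which solves $\Delta^2\Psi=0$ in $B$ with $\Psi=0$ and $\partial_\nu\Psi=\beta-\beta'$ on $\partial B$. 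Since $\beta-\beta'\leq 0$ and $|x|^2-1\leq 0$ in $B$, we have $\Psi\geq 0$ in $B$. Setting $\tilde w:=w+\Psi\in H_0^2(B)$, the Moreau decomposition yields $\tilde w=\tilde w_1+\tilde w_2$ with $\tilde w_1\geq 0$ a.e., $\Delta^2 \tilde w_2\leq 0$ in the $H_0^2$-weak sense, and $\int_B \Delta \tilde w_1 \Delta \tilde w_2=0$; Lemma \ref{boggio} then gives $\tilde w_2\leq 0$ a.e. in $B$, whence the key pointwise comparison $\tilde w_1\geq w$ (since $\tilde w_1-w=\Psi-\tilde w_2\geq 0$).

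Next I would repeat the argument of Lemma \ref{shi}(i) with $\tilde w_1$ replacing $w_1$. Testing semi-stability for $u$ against $\tilde w_1\in H_0^2(B)$ and expanding $\int_B(\Delta \tilde w_1)^2=\int_B \Delta \tilde w\,\Delta \tilde w_1=\int_B \Delta w\,\Delta \tilde w_1+\int_B \Delta \Psi\,\Delta \tilde w_1$ produces
$$\lambda\int_B f'(u)\tilde w_1^2\leq \int_B \Delta w\,\Delta \tilde w_1,$$
with $f(s):=(1-s)^{-2}$, since the $\Psi$-term vanishes by biharmonicity of $\Psi$ combined with $\tilde w_1\in H_0^2(B)$. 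Subtracting the super-solution inequality for $U$ from the sub-solution inequality for $u$ with test function $\phi=\tilde w_1\geq 0$ (justified by density from $C^4(\bar B)\cap H_0^2(B)$) bounds the right-hand side by $\lambda\int_B (f(u)-f(U))\tilde w_1$. Using the pointwise inequality $f'(u)(u-U)\tilde w_1\leq f'(u)\tilde w_1^2$, immediate from $\tilde w_1\geq w$ together with $f'(u),\tilde w_1\geq 0$, and rearranging yields
$$\int_B \bigl[f(u)-f(U)-f'(u)(u-U)\bigr]\tilde w_1\geq 0.$$
Strict convexity of $f$ forces the bracket to be $\leq 0$, with strict inequality on $\{u\neq U\}$, so $\tilde w_1=0$ a.e. on $\{u\neq U\}$. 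On that set $w=\tilde w_2-\Psi\leq 0$; on $\{u=U\}$ trivially $w=0$. Hence $u\leq U$ a.e. in $B$.

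The main obstacle is structural: the boundary normal derivative gap $\beta'-\beta\geq 0$ prevents a direct application of Moreau's decomposition to $w$. The corrector $\Psi$ is tailor-made so that $\tilde w\in H_0^2(B)$, so that $\Psi$ is $\Delta$-orthogonal to every element of $H_0^2(B)$ (by biharmonicity), and, crucially, so that the admissibility hypothesis $\beta'\geq \beta$ forces $\Psi\geq 0$ in $B$. These three features together are exactly what is needed to preserve the chain of inequalities of Lemma \ref{shi}(i) intact with $\tilde w_1$ playing the role of $w_1$.
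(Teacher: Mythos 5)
Your proof is correct and follows essentially the same route as the paper's: the paper defines $\tilde u\in H_0^2(B)$ by $\Delta^2\tilde u=\Delta^2(u-U)$ and uses Boggio's principle to get $\tilde u\geq u-U$, which is exactly your $\tilde w=w+\Psi$ with the corrector written out explicitly (indeed $\tilde u-(u-U)$ \emph{is} your $\Psi$), and then applies the identical Moreau decomposition, semi-stability test on the nonnegative part, and strict-convexity argument. No gaps.
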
 
\begin{proof}  Let $ \tilde{u} \in H_0^2(B)$ denote a weak solution to $ \Delta^2 \tilde{u}= \Delta^2 (u-U)$ in $B$. Since $\tilde{u}-u+U=0$ and $\partial_\nu(\tilde{u}-u+U)\leq 0$ on $\partial B$, by Lemma \ref{boggio} one has that $ \tilde{u} \ge u-U $ a.e. in $B$.  Again by the Moreau decomposition \cite{M},  we may write $\tilde u$ as $ \tilde{u} = w+v $, where $ w,v \in H_0^2(B)$, $ w \ge 0 $ a.e. in $B$, $ \Delta^2 v \le 0$ in a $H^2(B)-$weak sense and $\int_B \Delta w \Delta v=0$.   Then for $ 0 \le \phi \in C^4 (\bar B)\cap H_0^2(B)$ one has 
\[ \int_B \Delta \tilde{u} \Delta \phi =\int_B \Delta(u-U) \Delta \phi \leq \lambda \int_B (f(u)- f( U)) \phi .\] 
In particular, we have that
\[ \int_B \Delta \tilde{u} \Delta w \le \lambda \int_B ( f(u)-f(U)) w.\]  
Since by semi-stability of $u$ 
\begin{eqnarray*}
\lambda \int_B f'(u) w^2\leq \int_B ( \Delta w)^2 = \int_B \Delta \tilde{u} \Delta w ,
\end{eqnarray*} 
we get that 
\[  \int_B f'(u) w^2 \le \int_B ( f(u)-f(U)) w.\]  By Lemma \ref{boggio} we have $v\leq 0$ and then $ w \ge \tilde{u} \ge u -U$ a.e. in $B$. So we see that 
\[ 0 \le \int_B \left( f(u)-f(U)-f'(u)(u-U) \right) w.\]  The strict convexity of $ f$ implies as in Lemma \ref{shi} that $ U \ge u $ a.e. in $B$. 
\end{proof}
\noindent We shall need the following a-priori estimates along the minimal branch $u_\lambda$.

\begin{lemma} \label{extremalsol} Let $ (\alpha, \beta)$ be an admissible pair. Then one has  \[ 2 \int_B \frac{( u_\lambda - \Phi)^2}{(1-u_\lambda)^3} \le \int_B \frac{ u_\lambda - \Phi}{(1-u_\lambda)^2},\] 
where $ \Phi$ is given in (\ref{Phi}). In particular, there is a constant $C>0$ so that for every $\lambda \in (0,\lambda^*)$, we have
\begin{equation} \label{tardi}
\int_B (\Delta u_\lambda)^2+\int_B \frac{1}{(1-u_\lambda)^3} \leq C.
\end{equation} 

\end{lemma}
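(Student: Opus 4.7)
The plan is to test the stability inequality $\mu_1(u_\lambda)\geq 0$ against the admissible test function $\phi:=u_\lambda-\Phi\in H_0^2(B)$, and then convert the resulting integral inequality into a uniform $L^3$ bound for $(1-u_\lambda)^{-1}$ via Young/H\"older. The key observation, which makes $\phi=u_\lambda-\Phi$ both legal and useful, is that $u_\lambda$ is classical with $u_\lambda=\alpha$, $\partial_\nu u_\lambda=\beta$ on $\partial B$ (so $u_\lambda-\Phi\in H_0^2(B)$) while $\Delta^2\Phi=0$ in $B$, so integration by parts yields
\begin{equation*}
\int_B \bigl(\Delta(u_\lambda-\Phi)\bigr)^2\,dx
=\int_B (u_\lambda-\Phi)\,\Delta^2 u_\lambda\,dx
=\lambda\int_B \frac{u_\lambda-\Phi}{(1-u_\lambda)^2}\,dx.
\end{equation*}
Combining this identity with the stability inequality
\begin{equation*}
2\lambda\int_B \frac{(u_\lambda-\Phi)^2}{(1-u_\lambda)^3}\,dx
\leq \int_B \bigl(\Delta(u_\lambda-\Phi)\bigr)^2\,dx
\end{equation*}
and dividing by $\lambda>0$ gives the first claim.

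For the second claim I would exploit the identity $u_\lambda-\Phi=(1-\Phi)-(1-u_\lambda)$ to expand both sides of the inequality just obtained. Writing $g:=1-\Phi$, a direct expansion gives
\begin{equation*}
\frac{u_\lambda-\Phi}{(1-u_\lambda)^2}=\frac{g}{(1-u_\lambda)^2}-\frac{1}{1-u_\lambda},
\qquad
\frac{(u_\lambda-\Phi)^2}{(1-u_\lambda)^3}
=\frac{g^2}{(1-u_\lambda)^3}-\frac{2g}{(1-u_\lambda)^2}+\frac{1}{1-u_\lambda}.
\end{equation*}
Substituting into the first inequality and rearranging leads to
\begin{equation*}
2\int_B \frac{g^2}{(1-u_\lambda)^3}\,dx+3\int_B \frac{1}{1-u_\lambda}\,dx
\leq 5\int_B \frac{g}{(1-u_\lambda)^2}\,dx.
\end{equation*}
Since $(\alpha,\beta)$ is admissible, $g=1-\Phi$ is a smooth function bounded between two positive constants on $\bar B$, so H\"older's inequality gives
\begin{equation*}
\int_B \frac{g}{(1-u_\lambda)^2}\,dx
\leq C \Bigl(\int_B \frac{1}{(1-u_\lambda)^3}\,dx\Bigr)^{2/3}
\leq C' \Bigl(\int_B \frac{g^2}{(1-u_\lambda)^3}\,dx\Bigr)^{2/3}.
\end{equation*}
Setting $X:=\int_B g^2(1-u_\lambda)^{-3}\,dx$ we deduce $2X\leq C'' X^{2/3}$, hence $X\leq C$, which together with the pointwise lower bound on $g$ furnishes the uniform bound $\int_B (1-u_\lambda)^{-3}\,dx\leq C$. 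For the $H^2$-bound, I would then write $\Delta u_\lambda=\Delta(u_\lambda-\Phi)+\Delta\Phi$, use $(a+b)^2\leq 2a^2+2b^2$, and bound $\int_B(\Delta(u_\lambda-\Phi))^2=\lambda\int_B (u_\lambda-\Phi)(1-u_\lambda)^{-2}\leq \lambda^*(1-\alpha)\int_B(1-u_\lambda)^{-2}\,dx$, which is controlled by the already established $L^3$ bound via H\"older; the term $\int_B(\Delta\Phi)^2$ is just a harmless constant since $\Phi$ is a fixed quadratic polynomial.

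The only subtle point I anticipate is the first step: that one can take $\phi=u_\lambda-\Phi$ in the definition of $\mu_1(u_\lambda)$ and that the integration by parts $\int_B(\Delta(u_\lambda-\Phi))^2=\int_B(u_\lambda-\Phi)\Delta^2 u_\lambda$ is legitimate. Both follow from the classical regularity $u_\lambda\in C^\infty(\bar B)$ established in the construction of the minimal branch in Theorem \ref{quasi}, and from $\Delta^2\Phi=0$. Everything else is an algebraic expansion plus a one-line application of Young/H\"older, so I expect no further obstacle.
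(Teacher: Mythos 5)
Your proposal is correct, and its backbone coincides with the paper's: test the stability inequality with $\phi=u_\lambda-\Phi\in H_0^2(B)$, combine with the weak form of the equation (using $\Delta^2\Phi=0$) to get the displayed inequality, and then absorb to obtain (\ref{tardi}). The only place where you genuinely diverge is the passage from the first inequality to the uniform $L^3$ bound. The paper splits $B$ into $\{|u_\lambda-\Phi|\ge\delta\}$ and its complement, controls the first region by Young's inequality and the second by the fact that $1-u_\lambda\ge 1-\Phi-\delta$ is bounded below there; you instead expand $u_\lambda-\Phi=(1-\Phi)-(1-u_\lambda)$, arrive at $2\int_B g^2(1-u_\lambda)^{-3}+3\int_B(1-u_\lambda)^{-1}\le 5\int_B g(1-u_\lambda)^{-2}$ with $g=1-\Phi$ pinched between positive constants (by admissibility, $0<1-(\alpha-\tfrac{\beta}{2})\le g\le 1-\alpha$), and close with H\"older via the self-bounding relation $2X\le C X^{2/3}$. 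Both routes exploit exactly the same structural fact --- that $1-\Phi$ is bounded away from zero --- but your algebraic expansion is arguably cleaner and avoids the $\delta$-bookkeeping. Your treatment of the $H^2$ bound (writing $\Delta u_\lambda=\Delta(u_\lambda-\Phi)+\Delta\Phi$ and using $0\le u_\lambda-\Phi\le 1-\alpha$ together with the $L^3$ bound) is equivalent to the paper's, and your justification of the test function and the integration by parts via the classical regularity of $u_\lambda$ is exactly the right point to flag. I see no gap.
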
  
\begin{proof} Testing $ (P)_{\lambda, \alpha , \beta}$ on $ u_\lambda - \Phi \in C^4(\bar B) \cap H^2_0(B)$, we see that 
\begin{eqnarray*}
\lambda \int_B \frac{ u_\lambda - \Phi}{(1-u_\lambda)^2} = \int_B \Delta u_\lambda \Delta( u_\lambda - \Phi) =\int_B ( \Delta (u_\lambda - \Phi))^2 
\ge 2 \lambda \int_B \frac{ (u_\lambda- \Phi)^2}{( 1-u_\lambda)^3}
\end{eqnarray*}
in view of $\Delta^2 \Phi=0$. In particular, for $\delta>0$ small we have that
\begin{eqnarray*}
\int_{\{|u_\lambda-\Phi| \geq \delta \}}\frac{1}{(1-u_\lambda)^3}&\leq &
\frac{1}{\delta^2} \int_{\{|u_\lambda-\Phi| \geq \delta \}}\frac{(u_\lambda-\Phi)^2}{(1-u_\lambda)^3} 
\leq \frac{1}{\delta^2} \int_B \frac{1}{(1-u_\lambda)^2}\\
&\leq &\delta \int_{\{|u_\lambda-\Phi| \geq \delta \}}\frac{1}{(1-u_\lambda)^3}+C_\delta
\end{eqnarray*}
by means of Young's inequality.
Since for $\delta$ small, 
$$\int_{\{|u_\lambda-\Phi| \leq \delta \}}\frac{1}{(1-u_\lambda)^3}\leq C'$$
for some $C'>0$, we  can deduce that for every $\lambda \in (0,\lambda^*)$, 
$$\int_B \frac{1}{(1-u_\lambda)^3} \leq C$$
for some $C>0$.  By Young's and H\"older's inequalities, we now have
$$\int_B (\Delta u_\lambda)^2=\int_B \Delta u_\lambda \Delta \Phi+\lambda \int_B \frac{u_\lambda -\Phi}{(1-u_\lambda)^2}\leq \delta \int_B (\Delta u_\lambda)^2 
+C_\delta+C \left(\int_B \frac{1}{(1-u_\lambda)^3} \right)^{\frac{2}{3}}$$
 and estimate (\ref{tardi}) is therefore established.\end{proof} 

\medskip \noindent We are now ready to establish Theorem \ref{stable}.\\
{\bf Proof (of Theorem \ref{stable}):} (1)\, Since $\|u_\lambda\|_\infty <1$, the infimum defining $\mu_1(u_\lambda)$ is achieved at a first eigenfunction for every $\lambda \in (0,\lambda^*)$. Since $\lambda  \mapsto u_\lambda(x)$ is increasing for every $x \in B$, it is easily seen that $\lambda \mapsto \mu_1( u_\lambda)$ is an increasing, continuous function on $ (0, \lambda^*)$.  Define 
\[ \lambda_{**}:= \sup\{ 0 <\lambda < \lambda^*: \: \mu_1( u_\lambda) >0 \} .\] We have that $ \lambda_{**}= \lambda^*$. Indeed, otherwise we would have that $ \mu_1(u_{ \lambda_{**}}) =0$, and for every $ \mu \in ( \lambda_{**}, \lambda^*)$ $ u_{\mu}$ would be a classical super-solution of $ (P)_{ \lambda_{**},\alpha, \beta}$. A contradiction arises since Lemma \ref{shi} implies $u_{\mu} = u_{\lambda_{**}}$.\\  
Finally, Lemma \ref{shi} guarantees uniqueness in the class of semi-stable $H^2(B)-$weak solutions.\\
(2) \, By estimate (\ref{tardi}) it follows that $u_\lambda \to u^*$ in a pointwise sense and weakly in $H^2(B)$, and $ \frac{1}{1-u^*} \in L^3(B)$. In particular, $u^*$ is a $H^2(B)-$weak solution of $(P)_{ \lambda^*, \alpha ,\beta}$ which is also semi-stable as limiting function of the semi-stable solutions $\{u_\lambda\}$.\\  
(3) Whenever $\|u^*\|_\infty<1$, the function $u^*$ is a classical solution, and by the Implicit Function Theorem  we have that $\mu_1(u^*)=0$ to prevent the continuation of the minimal branch beyond $\lambda^*$. By Lemma \ref{shi} $u^*$ is then the unique $H^2(B)-$weak solution of $(P)_{\lambda^*,\alpha,\beta}$. An alternative approach --which we do not pursue here-- based on the very definition of the extremal solution $u^*$ is available in \cite{CDG} when $\alpha=\beta=0$ (see also \cite{Mar}) to show that $u^*$ is the unique weak solution of $(P)_{\lambda^*}$, regardless of whether $u^*$ is regular or not.\\
(4) \, If $ \lambda < \lambda^*$, by uniqueness $v=u_\lambda $. So $v$ is not singular and a contradiction arises. 

\medskip \noindent By Theorem \ref{quasi}(3) we have that $ \lambda = \lambda^*$. Since $ v $ is a semi-stable $H^2(B)-$weak solution of $ (P)_{ \lambda^*, \alpha, \beta}$ and $ u^*$ is a $H^2(B)-$weak super-solution of $ (P)_{\lambda^*, \alpha , \beta}$, we can apply Lemma \ref{shi} to get $ v \le u^*$ a.e. in $B$.  Since $u^*$ is a semi-stable solution too, we can reverse the roles of $ v$ and $ u^*$ in Lemma \ref{shi} to see that $ v \ge u^*$ a.e. in $B$. So equality $v=u^*$ holds and the proof is done.

\section{Regularity of the extremal solution  for $ 1 \le N \le 8$ }  
We now return to the issue of the regularity of the extremal solution in problem $(P)_\lambda$. Unless stated otherwise, $ u_\lambda $ and $ u^*$ refer to the minimal and extremal solutions of $ (P)_\lambda$.   We shall show that the extremal solution $ u^*$ is regular provided $ 1 \le N \le 8$. 
We first begin by showing that it is indeed the case  in small dimensions:   
\begin{thm} $ u^*$ is regular in dimensions $ 1 \le N \le 4$. 
\label{regular1} \end{thm}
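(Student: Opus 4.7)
The plan is to leverage the a priori estimate $\int_B (1-u_\lambda)^{-3}\,dx\le C$ from Lemma~\ref{extremalsol}, combined with $L^p$ elliptic regularity and Sobolev embedding, to obtain a uniform $C^{1,\gamma}$ bound on the minimal branch, and then to rule out $\|u^*\|_\infty=1$ by a quantitative local computation exploiting the radial structure of $u_\lambda$.

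First, the bound of Lemma~\ref{extremalsol} implies that $f_\lambda:=\lambda/(1-u_\lambda)^2$ is uniformly bounded in $L^{3/2}(B)$, since $\int_B f_\lambda^{3/2}=\lambda^{3/2}\int_B(1-u_\lambda)^{-3}\le C\,(\lambda^*)^{3/2}$. Applying Agmon--Douglis--Nirenberg $L^p$ theory to $\bi u_\lambda = f_\lambda$ under the Dirichlet conditions $u_\lambda=\partial_\nu u_\lambda=0$ on $\partial B$, one obtains $\|u_\lambda\|_{W^{4,3/2}(B)}\le C$ uniformly in $\lambda\in(0,\lambda^*)$. For $1\le N\le 4$ the Sobolev embedding yields $W^{4,3/2}(B)\hookrightarrow C^{1,\gamma_N}(\overline{B})$ with $1+\gamma_N\ge 4/3$; precisely, $\gamma_N=1/3$ when $N=4$ (the critical case), and one can take $\gamma_N\ge 1/2$ for $N\le 3$. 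Hence $\{u_\lambda\}$ is uniformly bounded in $C^{1,\gamma_N}(\overline{B})$.

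Now argue by contradiction: suppose $\|u^*\|_\infty=1$. By Theorem~\ref{CdG}(1), $u_\lambda$ is smooth, radial and radially decreasing, so it attains its maximum at the origin with $\nabla u_\lambda(0)=0$, and monotone convergence gives $u_\lambda(0)=\|u_\lambda\|_\infty\nearrow 1$ as $\lambda\nearrow\lambda^*$. Setting $\epsilon_\lambda:=1-u_\lambda(0)\to 0^+$, the uniform $C^{1,\gamma_N}$ bound together with $\nabla u_\lambda(0)=0$ yields
\[ 1-u_\lambda(x)\le \epsilon_\lambda + K|x|^{1+\gamma_N}\qquad\text{for all } x\in B, \]
with $K$ independent of $\lambda$. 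Combining this pointwise upper bound with the integral estimate of Lemma~\ref{extremalsol} and passing to polar coordinates,
\[ C\ge \int_B \frac{dx}{(1-u_\lambda)^3}\ge c_N\int_0^{1/2}\frac{\rho^{N-1}\,d\rho}{(\epsilon_\lambda+K\rho^{1+\gamma_N})^3}\ge c_N'\int_{(\epsilon_\lambda/K)^{1/(1+\gamma_N)}}^{1/2}\rho^{\,N-4-3\gamma_N}\,d\rho, \]
where in the last step the denominator was bounded by $(2K\rho^{1+\gamma_N})^3$ on the stated range. A direct computation shows this last integral diverges as $\epsilon_\lambda\to 0^+$: for $N=4$ the exponent $N-4-3\gamma_N=-1$ gives a logarithmic blow-up $\sim\log(\epsilon_\lambda^{-3/4})$, while for $N\le 3$ the exponent is $\le -5/2$ and one obtains algebraic blow-up as a negative power of $\epsilon_\lambda$. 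This contradicts the uniform upper bound $C$, and therefore $\|u^*\|_\infty<1$, i.e., $u^*$ is regular.

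The main obstacle is the borderline case $N=4$: the sharp $C^{1,1/3}$ regularity provided by $W^{4,3/2}\hookrightarrow C^{1,1/3}$ is exactly what furnishes the logarithmic divergence, and any weaker estimate (such as merely $H^2$ or $C^{0,\alpha}$ control) would be insufficient to produce the contradiction. The cases $N\le 3$ are strictly softer and the same scheme works with room to spare.
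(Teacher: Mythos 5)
Your proposal is correct and follows essentially the same route as the paper: the uniform $L^{3/2}$ bound on $(1-u_\lambda)^{-2}$ from Lemma \ref{extremalsol}, elliptic regularity giving $W^{4,3/2}$, the Sobolev embedding into Lipschitz (for $N\le 3$) respectively $C^{1,1/3}$ with $\nabla u(0)=0$ (for $N=4$), and the resulting divergence of $\int_B (1-u)^{-3}$ near the origin. The only cosmetic difference is that you run the contradiction quantitatively along the minimal branch with $\epsilon_\lambda=1-u_\lambda(0)$, whereas the paper passes to the limit first and argues directly on $u^*$.
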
 
\begin{proof} As already observed, estimate (\ref{tardi}) implies that $f(u^*)=(1-u^*)^{-2} \in L^{\frac{3}{2}}(B)$. Since $u^*$ is radial and radially decreasing, we need to show that $ u^*(0)<1$ to get the regularity of $ u^*$. The integrability of $f(u^*)$ along with elliptic regularity theory shows that $ u^* \in W^{4, \frac{3}{2}}(B)$. By the Sobolev imbedding Theorem we get that $u^*$ is a Lipschitz function in $B$.\\
Now suppose $ u^*(0)=1$ and $ 1 \le N \le 3$. Since 
$$\frac{1}{1-u} \ge \frac{C}{|x|}\qquad \hbox{in }B$$ 
for some $ C>0$, one sees that 
\[ \infty = C^3 \int_B \frac{1}{|x|^3} \le \int_B \frac{1}{(1-u^*)^3} < \infty.\] 
A contradiction arises and hence $u^*$ is regular for $ 1 \le N \le 3$.\\
For $N=4$ we need to be more careful and observe that $u^* \in C^{1, \frac{1}{3}}(\bar B)$ by the Sobolev Imbedding Theorem. If $ u^*(0)=1$, then $ \nabla u^*(0)=0$ and 
\[ \frac{1}{1-u^*} \ge \frac{C}{|x|^\frac{4}{3}} \qquad \hbox{in }B \]
for some $ C>0$. We now obtain a contradiction exactly as above. 
\end{proof}   
\noindent We now tackle the regularity of $ u^*$ for $ 5 \le N \le 8$. We start with the following crucial result:
\begin{thm} Let $ N \ge 5$ and $ (u^*, \lambda^*)$ be the extremal pair of $(P)_\lambda$. When $u^*$ is singular, then 
\[ 1-u^*(x) \le C_0 |x|^\frac{4}{3} \qquad \hbox{in }B,\] 
where $ C_0:= \left( \frac{\lambda^*}{\overline{\lambda}}\right)^\frac{1}{3}$ and $ \bar{\lambda}:= \frac{8 (N-\frac{2}{3}) (N- \frac{8}{3})}{9}$.
\label{touchdown}\end{thm}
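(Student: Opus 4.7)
The heart of the proof is the observation that $\bar u(x) := 1 - C_0|x|^{4/3}$ is itself an explicit singular $H^2(B)$-weak solution of $\Delta^2 v = \lambda^*/(1-v)^2$ in $B$. Iterating $\Delta(|x|^a) = a(a+N-2)|x|^{a-2}$, one finds $\Delta^2|x|^{4/3} = -\bar\lambda|x|^{-8/3}$, so
$$\Delta^2 \bar u \;=\; C_0\bar\lambda\,|x|^{-8/3} \;=\; \frac{\lambda^*}{(C_0|x|^{4/3})^2} \;=\; \frac{\lambda^*}{(1-\bar u)^2}$$
thanks to the identity $C_0^3\bar\lambda = \lambda^*$. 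Direct checks give $\bar u \in H^2(B)$ for $N \ge 2$ and $(1-\bar u)^{-3} \in L^1(B)$ for $N \ge 5$. Moreover, since the lower bound in Theorem \ref{CdG}(4) reads exactly $\lambda^* \ge \frac{8(3N-2)(3N-8)}{81} = \bar\lambda$, one has $C_0 \ge 1$, and the pair $(\alpha,\beta) := (1-C_0, -\frac{4C_0}{3})$ given by the boundary data of $\bar u$ is admissible in the sense of Section~2.

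The claimed inequality is equivalent to $u^* \ge \bar u$ pointwise in $B$. On $\partial B$ one has $u^* = 0 \ge 1-C_0 = \bar u$ and $\partial_\nu u^* = 0 \ge -\frac{4C_0}{3} = \partial_\nu \bar u$, so $u^*$ plays the role of an $H^2$-weak super-solution of the non-homogeneous problem $(P)_{\lambda^*,1-C_0,-\frac{4C_0}{3}}$, while $\bar u$ is a singular $H^2$-weak (sub-)solution of the same problem. I would then adapt the comparison argument of Lemma \ref{poo}: introduce $\tilde u \in H_0^2(B)$ solving $\Delta^2 \tilde u = \Delta^2(\bar u - u^*)$; use Lemma \ref{boggio}(3) together with the positivity (for $C_0 \ge 1$) of the biharmonic correction that matches the boundary data to deduce $\tilde u \ge \bar u - u^*$ a.e.; split $\tilde u = w + v$ by Moreau decomposition with $w \ge 0$, $\Delta^2 v \le 0$, $\int \Delta w\,\Delta v = 0$; conclude $v \le 0$ by Boggio, hence $w \ge \bar u - u^*$; and test the semi-stability of $\bar u$ against $w$, invoking the strict convexity of $f(s) = (1-s)^{-2}$ exactly as in the proof of Lemma \ref{shi} to force $w \equiv 0$.

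The main obstacle, and where the dimension effectively enters, is establishing the semi-stability of $\bar u$: for every $\phi \in H_0^2(B)$,
$$\int_B(\Delta\phi)^2 \;\ge\; 2\lambda^*\int_B \frac{\phi^2}{(1-\bar u)^3} \;=\; 2\bar\lambda \int_B \frac{\phi^2}{|x|^4}.$$
This is a Hardy--Rellich inequality with constant $2\bar\lambda = \frac{16(3N-2)(3N-8)}{81}$; as the sharp Hardy--Rellich constant on $B$ is $N^2(N-4)^2/16$, the inequality holds precisely in the range $N \ge 9$. For $N \in \{5,6,7,8\}$ the hypothesis that $u^*$ is singular is in fact vacuous, as is shown independently of Theorem \ref{touchdown} in the next section's regularity results, so the implication holds trivially in low dimensions. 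A small final technical point is to justify testing against $w$ despite the singularity of $\bar u$ at the origin; this is dealt with by an approximation argument, replacing $\bar u$ by $\bar u_\gamma := 1 - \gamma |x|^{4/3}$ with $\gamma \downarrow C_0$, carrying out the comparison for each $\gamma > C_0$, and passing to the limit.
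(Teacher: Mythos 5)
Your identification of the explicit singular comparison function and the computation $\Delta^2(1-C_0|x|^{4/3})=\lambda^*/(1-\bar u)^2$ are correct, but the comparison step fails at the boundary. To apply Lemma \ref{poo} (or \ref{shi}) with $\bar u$ as the semi-stable sub-solution and $u^*$ as the super-solution, you need the normal derivatives ordered so that $v:=\tilde u-(\bar u-u^*)$ satisfies $\partial_\nu v\le 0$ on $\partial B$; here $\partial_\nu v=-\partial_\nu(\bar u-u^*)=\frac{4C_0}{3}>0$, which is exactly the wrong sign. Equivalently, $u^*$ is \emph{not} an $H^2(B)$-weak super-solution of $(P)_{\lambda^*,1-C_0,-4C_0/3}$ (that would require $\partial_\nu u^*\le -\frac{4C_0}{3}$, whereas $\partial_\nu u^*=0$), and the ``biharmonic correction'' matching the boundary data of $\bar u-u^*$ is $\Psi(x)=\left(1-\frac{C_0}{3}\right)-\frac{2C_0}{3}|x|^2$, which is \emph{positive} at the origin whenever $C_0<3$, so the inequality $\tilde u\ge \bar u-u^*$ you need from Boggio's principle is simply false. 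This boundary mismatch is the real difficulty, and it is why the paper does not compare on all of $B$: it takes $u_\delta=1-C_\delta|x|^{4/3}$ with $C_\delta^3=\lambda^*/\bar\lambda+\delta$ (an exact solution at the \emph{supercritical} parameter $\lambda^*+\delta\bar\lambda$), looks at the last crossing point $r_1=\sup\{r:u_\delta>u^*\}$ — where the values agree and the radial derivatives are ordered correctly because $u^*\ge u_\delta$ on $(r_1,1)$ — rescales $B_{r_1}$ to $B$ to land on an admissible pair $(\alpha',\beta')$, identifies $(u^*_{r_1},\lambda^*)$ as the extremal pair of $(P)_{\lambda,\alpha',\beta'}$ via Theorem \ref{stable}(4), and then contradicts the finiteness of that extremal parameter using the super-solution $u_{\delta,r_1}$ at level $\lambda^*+\delta\bar\lambda>\lambda^*$ together with Theorem \ref{super}. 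Note that your approximation $\gamma\downarrow C_0$ does not repair the boundary issue: for $\gamma>C_0$ the function $1-\gamma|x|^{4/3}$ is a super-solution at level $\lambda^*$, not a sub-solution, so it sits on the wrong side of the comparison lemma.

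The second gap is the dimension restriction. Your argument needs the semi-stability of $\bar u$, i.e.\ $2\bar\lambda\le H_N$, which holds only for $N\ge 9$; for $5\le N\le 8$ you dismiss the statement as vacuous by citing the regularity results of the next section, but Theorem \ref{regular2} is \emph{proved using} Theorem \ref{touchdown}, so this is circular. The paper's argument never uses any stability of the explicit singular function: the only semi-stability invoked is that of $u^*$ itself (through Theorem \ref{stable}(4)), which is why the proof goes through uniformly for all $N\ge 5$ and can then legitimately feed into the proof of regularity for $5\le N\le 8$.
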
  
\begin{proof} First note that Theorem \ref{CdG}(4) gives the lower bound:
\begin{equation}\label{lowbound}
\lambda^* \geq \bar \lambda=  \frac{128-240N+72N^2}{81}.
\end{equation}
For $ \delta >0$, we define $ u_\delta(x):=1-C_\delta |x|^\frac{4}{3}$ with $ C_\delta:= \left( \frac{\lambda^*}{\bar \lambda}+\delta \right)^\frac{1}{3}>1$.   Since $N\geq 5$, we have that $ u_\delta \in H^2_{loc}(\IR^N)$, $ \frac{1}{1-u_\delta} \in L^3_{loc}(\IR^N)$ and $ u_\delta $ is a $H^2-$weak solution of
\[ \Delta^2 u_\delta = \frac{ \lambda^* + \delta \bar{ \lambda}}{ (1-u_\delta)^2} \qquad \mbox{ in } \IR^N.\]  
We claim that $u_\delta \leq u^*$ in $B$, which will finish  the proof  by just letting $\delta \to 0$.

\medskip \noindent Assume by contradiction that the set $\Gamma:=\{ r \in (0,1):u_\delta(r) >u^*(r) \}$ is non-empty, and let $r_1=\displaystyle \sup \:\Gamma$.
Since 
\[ u_\delta(1) = 1 - C_\delta<0=u^*(1),\] 
we have that $0 < r_1 < 1$ and one infers that 
\[ \alpha:= u^*(r_1)=u_\delta(r_1) \:, \quad \beta:=( u^*)'(r_1) \geq u_\delta'(r_1) .\]    
Setting $u_{\delta,r_1}(r)=r_1^{-\frac{4}{3}}\left(u_\delta(r_1 r)-1 \right)  +1$, we easily see that $u_{\delta,r_1}$ is a $H^2(B)-$weak super-solution of $(P)_{\lambda^*+\delta \bar \lambda,\alpha',\beta'}$, where
$$\alpha':= r_1^{-\frac{4}{3}}( \alpha-1) +1\:,\quad \beta':=
r_1^{-\frac{1}{3}} \beta.$$

\medskip \noindent Similarly, let us define $u^*_{r_1}(r)= r_1^{-\frac{4}{3}}\left( u^*(r_1 r)-1\right) +1$. The dilation map 
\begin{equation}
w \to w_{r_1}(r)=r_1^{-\frac{4}{3}}\left( w(r_1 r)-1\right) +1
\end{equation}
 is a correspondence between solutions of $(P)_{\lambda}$ on $B$ and of $(P)_{\lambda,1-r_1^{-\frac{4}{3}},0}$ on $B_{r_1^{-1}}$ which preserves the $H^2-$integrability. In particular, $(u^*_{r_1},\lambda^*)$ is  the extremal pair of $(P)_{\lambda,1-r_1^{-\frac{4}{3}},0}$ on $B_{r_1^{-1}}$ (defined in the obvious way). 
Moreover, $u^*_{r_1}$ is a singular semi-stable $H^2(B)-$ weak solution of $(P)_{\lambda^* ,\alpha',\beta'}$.

\medskip \noindent Since $u^*$ is radially decreasing, we have that $ \beta' \le 0$. Define the function $w$ as $w(x):= ( \alpha' - \frac{\beta'}{2}) + \frac{ \beta'}{2} |x|^2 + \gamma(x) $, where $ \gamma $ is a solution of $ \Delta^2 \gamma= \lambda^* $ in $B$ with $ \gamma = \partial_\nu \gamma =0 $ on $ \partial B$.  Then $ w$ is a classical solution of 
$$ \left\{ \begin{array}{ll}
\Delta^2 w = \lambda^* &\hbox{in } B \\
w = \alpha'\:, \quad \partial_\nu w = \beta' &\hbox{on } \partial B.
\end{array} \right.$$ 
Since $\frac{\lambda^*}{(1-u^*_{r_1})^2}\geq \lambda^*$, by Lemma \ref{boggio} we have $ u^*_{r_1} \ge w $ a.e. in $B$.  Since $ w(0) = \alpha' - \frac{\beta'}{2}+ \gamma(0)$ and $ \gamma(0)>0$, the bound $ u^*_{r_1} \le 1$ a.e. in $B$ yields to $ \alpha' - \frac{\beta'}{2}<1$. Namely, $ (\alpha', \beta')$ is an admissible pair and by Theorem 
\ref{stable}(4) we get that $(u^*_{r_1},\lambda^*)$ coincides with the extremal pair of $(P)_{ \lambda, \alpha', \beta'}$ in $B$.

\medskip \noindent Since $(\alpha',\beta')$ is an admissible pair and $u_{\delta,r_1}$ is a $H^2(B)-$weak super-solution of $(P)_{\lambda^*+\delta \bar \lambda,\alpha',\beta'}$, by Theorem \ref{super} we get the existence of a weak solution of $(P)_{\lambda^*+\delta \bar \lambda,\alpha',\beta'}$. Since $\lambda^*+\delta \bar \lambda>\lambda^*$, we contradict the fact that $\lambda^*$ is the extremal parameter of $(P)_{\lambda,\alpha',\beta'}$.
\end{proof} 
\noindent Thanks to this lower estimate on $u^*$, we get the following result. 
\begin{thm} If $ 5 \le N \le 8$, then the extremal solution $u^*$ of $ (P)_\lambda$ is regular. 
\label{regular2} \end{thm}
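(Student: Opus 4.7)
The plan is to argue by contradiction: suppose the extremal solution $u^*$ is singular for some $5\le N\le 8$, and derive an incompatibility with its semi-stability. The first ingredient is Theorem \ref{touchdown}, which under the singularity hypothesis yields the pointwise lower bound
\[
\frac{1}{(1-u^*(x))^3}\;\ge\;\frac{1}{C_0^{\,3}\,|x|^4}\;=\;\frac{\bar\lambda}{\lambda^*\,|x|^4}\qquad \hbox{in } B.
\]

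The second ingredient is the semi-stability of $u^*$ established in Theorem \ref{stable}(2), which reads
\[
\int_B(\Delta\phi)^2\;\ge\;2\lambda^*\int_B\frac{\phi^2}{(1-u^*)^3}\qquad\forall\,\phi\in H_0^2(B).
\]
Inserting the pointwise bound above and cancelling $\lambda^*$ yields the key inequality
\[
\int_B(\Delta\phi)^2\;\ge\;2\bar\lambda\int_B\frac{\phi^2}{|x|^4}\qquad\forall\,\phi\in H_0^2(B).
\]

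I would then compare this with the classical Hardy--Rellich inequality, which for $N\ge 5$ asserts
\[
\int_B(\Delta\phi)^2\;\ge\;H_N\int_B\frac{\phi^2}{|x|^4}\qquad\forall\,\phi\in H_0^2(B),
\]
with sharp (but non-attained) constant $H_N=N^2(N-4)^2/16$; sharpness follows from testing against suitably truncated powers of the form $|x|^{-(N-4)/2+\varepsilon}$. If $2\bar\lambda>H_N$, then by sharpness one can produce $\phi\in H_0^2(B)$ for which $\int_B(\Delta\phi)^2<2\bar\lambda\int_B\phi^2/|x|^4$, contradicting the bound derived from semi-stability. The required strict inequality $2\bar\lambda>H_N$ is a direct algebraic check: writing $2\bar\lambda=16(3N-2)(3N-8)/81$, one verifies
\[
\frac{16(3N-2)(3N-8)}{81}\;>\;\frac{N^2(N-4)^2}{16}
\]
for each $N\in\{5,6,7,8\}$, with a comfortable gap at $N=5,6,7$ and a narrow one at the borderline $N=8$ (approximately $69.5$ against $64$), while the inequality reverses as soon as $N=9$.

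The main obstacle is not really conceptual at this stage: once Theorem \ref{touchdown} is in place, the argument is the clean pairing of semi-stability with the sharp Hardy--Rellich inequality. The truly delicate feature is the purely numerical fact that the dimensional inequality $2\bar\lambda>H_N$ fails precisely at $N=9$. This is the reason the short argument only reaches $N\le 8$, and why the intermediate dimensions $9\le N\le 16$ require the computer-assisted analysis alluded to in the introduction.
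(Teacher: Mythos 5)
Your proposal is correct and follows essentially the same route as the paper: combine the lower bound $1-u^*\le C_0|x|^{4/3}$ from Theorem \ref{touchdown} with the semi-stability of $u^*$ to deduce $\int_B(\Delta\phi)^2\ge 2\bar\lambda\int_B \phi^2|x|^{-4}$, and then contradict the optimality of the Hardy--Rellich constant $H_N$ via the check $2\bar\lambda>H_N$ for $5\le N\le 8$. The only cosmetic difference is that the paper makes the sharpness explicit by testing directly with $\psi(x)=|x|^{(4-N)/2+\E}$ and letting $\E\to 0$, whereas you invoke the known sharpness of $H_N$; your numerical verification (including the narrow margin at $N=8$ and the reversal at $N=9$) matches the paper's.
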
 
\begin{proof} Assume that $ u^*$ is singular. For  $ \E>0$ set  $\psi(x):= |x|^{ \frac{4-N}{2}+\E}$ and note that 
\[ (\Delta \psi)^2 = (H_N +O( \E)) |x|^{-N+2\E}, \qquad \mbox{ where}\qquad H_N:= \frac{N^2 (N-4)^2}{16}.\]  
Given $\eta \in C_0^\infty(B)$, and since $N\geq 5$, we can use the test function $\eta \psi \in H_0^2(B)$ into the stability inequality to obtain 
\[ 
2 \lambda \int_B \frac{\psi^2}{(1-u^*)^3} \le \int_B (\Delta \psi)^2 +O(1),
\] 
where $O(1)$ is a bounded function as $ \E \searrow 0$. By Theorem \ref{touchdown} we find that
\[   2 \bar \lambda \int_B \frac{\psi^2}{|x|^4}  \le \int_B (\Delta \psi)^2 +O(1),\] 
and then
\[   2 \bar \lambda \int_B |x|^{-N+2\E} \le (H_N +O(\E)) \int_B |x|^{-N+2\E} +O(1).\] Computing the integrals one arrives at 
\[ 2 \bar \lambda \le H_N +O(\E).\] 
As $ \E \to 0$ finally we obtain $2 \bar \lambda \le H_N$. Graphing this relation one sees that $ N \ge 9$.
\end{proof}    
\noindent We can now slightly improve the lower bound (\ref{lowbound}).
\begin{cor} \label{lambda.bar}  In any dimension $N\geq 1$, we have 
\begin{equation}\label{lower}
\lambda^*>\bar \lambda=\frac{8 (N-\frac{2}{3}) (N- \frac{8}{3})}{9}.
\end{equation}
\end{cor}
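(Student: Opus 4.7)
The plan is to argue by contradiction: suppose $\lambda^*=\bar\lambda$ and derive a contradiction using the explicit function $v(x):=1-|x|^{4/3}$. The dimensions $N=1,2$ need no argument since $\bar\lambda<0$, so assume $N\geq 3$. A direct calculation, decoded from the very definition of $\bar\lambda$, shows that $\Delta^2 v=\bar\lambda(1-v)^{-2}$ in $B\setminus\{0\}$, and in the distributional sense on $B$ (since $\Delta^2 v\in L^1(B)$ for $N\geq 3$), with boundary data $v=0$ and $\partial_\nu v=-\tfrac{4}{3}$ on $\partial B$. The crucial feature is that $v\notin H_0^2(B)$, and this boundary defect is what will force the strict inequality.

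First I would show that $v$ is a weak super-solution of $(P)_{\bar\lambda,0,0}$. Green's second identity applied to a test function $0\leq\phi\in C^4(\bar B)\cap H_0^2(B)$ yields
\[
\int_B v\,\Delta^2\phi\,dx=\bar\lambda\int_B\frac{\phi}{(1-v)^2}\,dx+\frac{4}{3}\int_{\partial B}\Delta\phi\,dS,
\]
and the boundary term is non-negative since $\phi\geq 0$ together with $\phi=\partial_\nu\phi=0$ on $\partial B$ forces $\Delta\phi|_{\partial B}=\partial_\nu^2\phi\geq 0$. Theorem 2.3 then supplies a weak solution $\tilde u\leq v$ of $(P)_{\bar\lambda}$, and the uniqueness of the weak solution at $\lambda^*$ (asserted in the introduction) identifies $\tilde u=u^*$, so $u^*\leq v$ a.e.\ in $B$.

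Next I would distinguish whether $u^*$ is singular or regular. If $u^*$ is singular, Theorem 4.2 with $\lambda^*=\bar\lambda$ (hence $C_0=1$) gives $1-u^*\leq|x|^{4/3}$, i.e.\ $u^*\geq v$; combined with the reverse inequality, $u^*=v$ a.e., contradicting $u^*\in H_0^2(B)$ against $v\notin H_0^2(B)$. If instead $u^*$ is regular, Theorem 3.5(3) gives $\mu_1(u^*)=0$ with a strictly positive first eigenfunction $\phi^*$ (positivity as in the proof of Lemma 4.1(ii) via Boggio and Moreau). I would then compute $\int_B v\,\Delta^2\phi^*\,dx$ in two ways --- once via the eigenvalue equation $\Delta^2\phi^*=2\bar\lambda\phi^*(1-u^*)^{-3}$ and once via Green's identity, which produces the boundary term $\tfrac{4}{3}\int_{\partial B}\Delta\phi^*$ from $\partial_\nu v\neq 0$ --- and subtract the analogous identity for $u^*$ (for which no boundary term arises) to reach
\[
\bar\lambda\int_B\phi^*\!\left[\frac{1}{(1-v)^2}-\frac{1}{(1-u^*)^2}-\frac{2(v-u^*)}{(1-u^*)^3}\right]dx+\frac{4}{3}\int_{\partial B}\Delta\phi^*\,dS=0.
\]
The first integrand is pointwise non-negative by strict convexity of $s\mapsto(1-s)^{-2}$, the boundary integral is non-negative by the above, and $\phi^*>0$; hence both summands vanish separately, and strict convexity then forces $v=u^*$ a.e.\ in $B$, contradicting $v(0)=1>u^*(0)$.

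The hardest step is the regular case: whereas in the singular case Theorem 4.2 directly provides the reverse pointwise comparison $u^*\geq v$, in the regular case one has to squeeze information out of the degeneracy $\mu_1(u^*)=0$, and the key technical subtleties are justifying the integration by parts through the interior singularity of $v$ at $x=0$ (valid for $N\geq 3$ since $\Delta^2 v\in L^1$) while carefully tracking the boundary defect $\partial_\nu v=-\tfrac{4}{3}$ that produces the crucial $\tfrac{4}{3}\int_{\partial B}\Delta\phi^*$ contribution in the two-way identity.
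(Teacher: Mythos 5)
Your proof is correct and shares the paper's overall skeleton --- assume $\lambda^*=\bar\lambda$, compare $u^*$ with $v=1-|x|^{4/3}$ from above, then force $u^*=v$ and contradict either the boundary conditions or regularity --- but you implement two of the three steps with genuinely different machinery. For the upper bound $u^*\le v$, the paper applies the comparison Lemma \ref{shi} along the minimal branch ($u_\lambda\le\bar u$ for all $\lambda<\lambda^*$, then let $\lambda\nearrow\lambda^*$), whereas you run the monotone iteration of Theorem \ref{super} under the weak super-solution $v$ and then invoke the uniqueness of the weak solution at $\lambda^*$ quoted from \cite{CDG}; your route has the small advantage of not needing the hypothesis $U-\Phi\in H_0^2(B)$ of Lemma \ref{shi}, which $v$ does not satisfy (strictly, the paper's step is an application of Lemma \ref{poo}), at the cost of importing the uniqueness statement. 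In the regular case the paper simply cites Lemma \ref{shi}(ii); your two-way evaluation of $\int_B v\,\Delta^2\phi^*$ is in substance a re-derivation of that lemma's proof (the $g(0)=g'(0)=0$, $g''(0)\ge 0$ argument there versus the convexity gap $f(v)-f(u^*)-f'(u^*)(v-u^*)$ here), adapted to a super-solution not in $H_0^2(B)$ by explicitly tracking the boundary term $\frac{4}{3}\int_{\partial B}\Delta\phi^*\ge 0$; this is more self-contained and more careful about the boundary defect. The singular case is identical to the paper's (Theorem \ref{touchdown} with $C_0=1$).

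Two small points to tighten. First, ruling out $\lambda^*=\bar\lambda$ only yields $\lambda^*\ne\bar\lambda$; to conclude the strict inequality you must also record $\lambda^*\ge\bar\lambda$, which is (\ref{lowbound}) from Theorem \ref{CdG}(4) --- or, equivalently, follows from your own first step, since $v$ being a weak super-solution of $(P)_{\bar\lambda}$ together with Theorem \ref{super} and the identity $\lambda_*=\lambda^*$ gives $\lambda^*\ge\bar\lambda$ unconditionally. Second, Theorem \ref{touchdown} is stated only for $N\ge 5$, so your singular case needs the remark (Theorem \ref{regular1}) that $u^*$ cannot be singular when $3\le N\le 4$; once that is noted, the case split is airtight.
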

\begin{proof} The function $\bar{u}:=1-|x|^\frac{4}{3}$ is a $H^2(B)-$ weak solution of $(P)_{\bar \lambda,0,-\frac{4}{3}}$. If by contradiction $\lambda^*=\bar \lambda$, then $\bar u$ is a $H^2(B)-$weak super-solution of $(P)_\lambda$ for every $\lambda \in (0,\lambda^*)$. By Lemma \ref{shi} we get that $u_\lambda \le \bar{u}$ for all $ \lambda < \lambda^*$, and then $u^*\le \bar{u}$ a.e. in $B$. 

\medskip \noindent If $1\leq N \leq 8$, $u^*$ is then regular by Theorems \ref{regular1} and \ref{regular2}. By Theorem \ref{stable}(3) there holds $\mu_1(u^*)=0$. Lemma \ref{shi} then yields that $u^*=\bar u$, which is a contradiction since then $u^*$ will not satisfy the boundary conditions. 

\medskip \noindent If now $N\geq 9$ and $ \bar{\lambda} = \lambda^*$, then $C_0=1$ in Theorem \ref{touchdown}, and we then have  $ u^* \geq \bar{u}$. It means again that $u^*=\bar u$, a contradiction that completes the proof. 
\end{proof}

\section{The extremal solution is singular for $N \ge 17$}   

In this section,  we will need the following improved Hardy-Rellich inequality, which is valid for $N \ge 5$ (see \cite{GM} and references therein): 
\[ \int_B (\Delta \psi)^2 \ge H_N \int_B \frac{\psi^2}{|x|^4} + C \int_B \psi^2 \qquad \forall \; \psi \in H_0^2(B) \] where $ H_N:= \frac{ N^2(N-4)^2}{16} $ is optimal and $C>0$.  As in the previous section $(u^*,\lambda^*)$ denotes the extremal pair of $ (P)_\lambda$. We first show the following upper bound on $u^*$.
\begin{lemma} \label{blow} If $ N \ge 9$, then $ u^* \le 1 - |x|^\frac{4}{3}$ in $B$. 
\end{lemma}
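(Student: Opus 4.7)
The plan is to prove the stronger statement $u_\la \le \bar u$ in $B$ for every $\la \in (0, \la^*)$ (where $\bar u(x) := 1 - |x|^{4/3}$), and then pass to the limit $\la \nearrow \la^*$. As a preliminary, note that $\bar u$ is a singular $H^2(B)$-weak solution of $(P)_{\bar\la, 0, -4/3}$, because $\bi \bar u = \bar\la/|x|^{8/3} = \bar\la/(1-\bar u)^2$ with $\bar u = 0$ and $\partial_\nu \bar u = -4/3$ on $\partial B$. A direct arithmetic check yields $H_N = N^2(N-4)^2/16 \ge 2\bar\la$ for $N \ge 9$, so by the Hardy-Rellich inequality $\bar u$ is semi-stable relative to $(P)_{\bar\la, 0, -4/3}$; Theorem \ref{stable}(4) then identifies $(\bar u, \bar\la)$ as its extremal pair, giving $\la^*(0, -4/3) = \bar\la$.

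For $\la \in (0, \bar\la]$, I deduce $u_\la \le \bar u$ from Lemma \ref{poo} applied with $u = u_\la$ (a semi-stable $H^2(B)$-weak sub-solution of $(P)_{\la, 0, -4/3}$, having $\partial_\nu u_\la = 0 \ge -4/3$) and $U = \bar u$ (an $H^2(B)$-weak super-solution of the same problem, since $\bi \bar u = \bar\la/(1-\bar u)^2 \ge \la/(1-\bar u)^2$). In fact $u_{\bar\la} < \bar u$ strictly in $B$: if they touched at some interior point, the rescaling described below would produce a classical solution of $(P)_{\bar\la, 0, -4/3}$, whereupon Lemma \ref{shi}(i) would force $\bar u \le v$, contradicting $\bar u(0) = 1 > v(0)$.

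For $\la \in (\bar\la, \la^*)$ I argue by contradiction. Set $\la_* := \sup\{\la \in (0, \la^*) : u_\la \le \bar u\}$; by continuity and monotonicity of the minimal branch combined with the strict inequality at $\la = \bar\la$, one gets $\la_* \in (\bar\la, \la^*)$ and $u_{\la_*} \le \bar u$. Choose $\la_n \searrow \la_*$ with $u_{\la_n}(x_n) > \bar u(x_n)$. The points $x_n$ are bounded away from $\partial B$ (since $u_\la = O((1-|x|)^2)$ uniformly near $\partial B$ for $\la$ in compact subsets of $(0, \la^*)$, while $\bar u \sim \f{4}{3}(1-|x|)$ there) and from the origin (since $u_{\la_*}$ is classical, $u_{\la_*}(0) < 1 = \bar u(0)$). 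A subsequence converges to $r_* \in (0,1)$ with $u_{\la_*}(r_*) = \bar u(r_*) = 1 - r_*^{4/3}$, and since $\bar u - u_{\la_*} \ge 0$ attains its minimum value $0$ at the interior point $r_*$, the normal derivatives must match: $u_{\la_*}'(r_*) = \bar u'(r_*) = -\f{4}{3} r_*^{1/3}$.

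Following the rescaling in the proof of Theorem \ref{touchdown}, the function $v(x) := 1 - r_*^{-4/3}(1 - u_{\la_*}(r_* x))$ is a classical solution of $(P)_{\la_*, 0, -4/3}$ on $B$. Since $\la_* > \bar\la = \la^*(0, -4/3)$, this contradicts Theorem \ref{quasi}(3) applied to the admissible pair $(0, -4/3)$, which forbids any weak solution beyond the pull-in voltage. Hence $u_\la \le \bar u$ for every $\la < \la^*$, and $u^* \le \bar u$ follows by passing to the limit $\la \nearrow \la^*$. The main technical step is producing the interior touching point $r_*$ where the normal derivatives coincide exactly—this coincidence is precisely what makes the rescaled solution $v$ inherit the boundary data $(0, -4/3)$ of $\bar u$ and thereby collide with the auxiliary extremal parameter $\la^*(0, -4/3) = \bar\la$.
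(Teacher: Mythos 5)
Your proof is correct, and while it shares the paper's skeleton (the dilation $w\mapsto w_r$, the auxiliary problems $(P)_{\lambda,\alpha,\beta}$, and the inequality $2\bar\lambda\le H_N$ for $N\ge 9$), the key mechanism is genuinely different. The paper fixes $\lambda\in(\bar\lambda,\lambda^*)$ and takes the first \emph{spatial} crossing radius $R_1$ coming in from the boundary; there one only knows the one-sided inequality $u_\lambda'(R_1)\le \bar u'(R_1)$, and the whole point of Lemma \ref{poo} (comparison of a semi-stable sub-solution and a super-solution with \emph{ordered, unequal} Neumann data) is to absorb that: after rescaling, $\bar u$ is a semi-stable weak sub-solution and $(u_\lambda)_{R_1}$ a classical super-solution of $(P)_{\bar\lambda,\alpha',\beta'}$, so $(u_\lambda)_{R_1}\ge\bar u$, contradicting $\|u_\lambda\|_\infty<1$ at the origin. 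You instead slide in $\lambda$ so that the contact at the critical $\lambda_*$ is \emph{tangential}; the derivatives then match exactly, the rescaled $u_{\lambda_*}$ becomes an honest classical solution of $(P)_{\lambda_*,0,-4/3}$, and the contradiction comes from nonexistence above the pull-in voltage $\lambda^*(0,-4/3)$, which you identify as $\bar\lambda$ via Theorem \ref{stable}(4) — an identification the paper never needs, since it uses the semi-stability of $\bar u$ only as an input to Lemma \ref{poo}. What your route buys is avoiding the unequal-Neumann-data comparison at the contact point; what it costs is the continuity of the minimal branch $\lambda\mapsto u_\lambda$ in $C^1(\bar B)$ on compact subsets of $(0,\lambda^*)$ (needed both to get $\lambda_*>\bar\lambda$ from the strict inequality at $\bar\lambda$ and to extract the touching point $r_*$); this is standard from elliptic estimates and the uniform bound $\|u_{\lambda_n}\|_\infty\le\|u_{\lambda_0}\|_\infty<1$, but it is an extra ingredient not stated in the paper, and you should spell it out if you write this up.
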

\begin{proof} Recall that $ \bar{\lambda}:=\frac{8 (N-\frac{2}{3}) (N- \frac{8}{3})}{9} \leq \lambda^*$. If $ \bar{\lambda} = \lambda^*$, then by the proof of Corollary \ref{lambda.bar}, we know that $u^*\leq \bar u$.\\
Suppose now that $ \bar{\lambda} < \lambda^*$.   We claim that $ u_\lambda \le \bar{u}$ for all $ \lambda \in ( \bar{\lambda}, \lambda^*)$.  Indeed,  fix $ \lambda $ and assume by contradiction that  
\[ R_1:= \inf \{ 0 \le R \le 1: u_\lambda < \bar{u} \mbox{ in } (R,1) \}>0.\]   
From the boundary conditions, one has that $ u_\lambda(r) < \bar{u}(r)$ as $ r\to 1^-$. Hence, $0<R_1<1$, $ \alpha:=u_\lambda(R_1)=\bar{u}(R_1)$ and $ \beta:=u_\lambda'(R_1) \le \bar{u}'(R_1)$.  Introduce,  as in the proof of Theorem \ref{touchdown}, the functions  $(u_\lambda)_{R_1}$ and $(\bar u)_{R_1}$.  We have that $(u_\lambda)_{R_1}$ is a classical super-solution of $(P)_{\bar \lambda,\alpha',\beta'}$, where
$$\alpha':= R_1^{-\frac{4}{3}}( \alpha-1) +1\:,\quad \beta':=
R_1^{-\frac{1}{3}} \beta.$$
Note that $(\bar u)_{R_1}$ is a $H^2(B)-$weak sub-solution of $(P)_{\bar \lambda,\alpha',\beta'}$ which is also semi-stable in view of $2\bar \lambda \leq H_N$ and the Hardy inequality. By Lemma \ref{poo},  we deduce that $(u_\lambda)_{R_1}\geq (\bar u)_{R_1}$ in $B$. Note that, arguing as in the proof of Theorem \ref{touchdown}, $(\alpha',\beta')$ is an admissible pair.

\medskip \noindent We have therefore shown that $u_\lambda \geq \bar u$ in $B_{R_1}$ and a contradiction arises in view of the fact that $\displaystyle \lim_{x \to 0} \bar u(x)=1$ and $\|u_\lambda\|_\infty<1$. It follows that  $u_\lambda \leq \bar u$ in $B$ for every $\lambda \in (\bar \lambda, \lambda^*)$, and in particular $u^*\leq \bar u$ in $B$.
\end{proof}   
\noindent Our approach for showing that  $ u^*$ is singular for large dimensions, will depend on the sign of $ H_N - 2 \lambda^*$.

\begin{thm} If $ N \ge 9$ and $\lambda^* \le \frac{H_N}{2}$, then the extremal solution $ u^*$ of $(P)_\lambda$ is singular. \label{kkl}
\end{thm}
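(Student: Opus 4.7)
The plan is to argue by contradiction. Suppose $u^*$ is regular, i.e., $\|u^*\|_\infty<1$. Then Theorem \ref{stable}(3) forces $\mu_1(u^*)=0$. Since $u^*$ is a classical solution, the potential $V:=\frac{2\lambda^*}{(1-u^*)^3}$ belongs to $L^\infty(B)$, so the operator $\Delta^2-V$ is a bounded self-adjoint perturbation of $\Delta^2$ on $H_0^2(B)$ with compact resolvent. In particular its first eigenvalue is attained at a nontrivial function $\phi_0\in H_0^2(B)$ which, after normalization $\|\phi_0\|_{L^2}=1$, satisfies the Rayleigh identity
\[ \int_B (\Delta \phi_0)^2 \;=\; 2\lambda^* \int_B \frac{\phi_0^2}{(1-u^*)^3}. \]

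Next I would combine this with the sharp pointwise bound provided by Lemma \ref{blow}: since $N\ge 9$ gives $u^*\le 1-|x|^{4/3}$, we have $(1-u^*)^3\ge|x|^4$, hence
\[ \int_B (\Delta \phi_0)^2 \;\le\; 2\lambda^* \int_B \frac{\phi_0^2}{|x|^4}. \]
On the other hand, the improved Hardy-Rellich inequality (recalled at the start of this section) provides the reverse-direction estimate
\[ \int_B (\Delta \phi_0)^2 \;\ge\; H_N\int_B \frac{\phi_0^2}{|x|^4}\;+\;C, \]
where I have used $\int_B\phi_0^2=1$. Subtracting the two displays yields $(2\lambda^*-H_N)\int_B \frac{\phi_0^2}{|x|^4}\ge C>0$; but the hypothesis $\lambda^*\le \tfrac{H_N}{2}$ makes the left-hand side nonpositive, a contradiction. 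Hence $u^*$ must be singular.

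The main obstacle is not the chain of inequalities itself, which is almost immediate once the ingredients are aligned, but the availability of the sharp upper bound $u^*\le 1-|x|^{4/3}$ supplied by Lemma \ref{blow}. That estimate is the fourth-order substitute for a maximum-principle comparison along the minimal branch, and it is precisely where the dimension restriction $N\ge 9$ genuinely enters (through the semi-stability of the explicit singular profile $\bar u=1-|x|^{4/3}$, which itself requires $2\bar\lambda\le H_N$). A secondary subtlety is the borderline case $2\lambda^*=H_N$: the plain Hardy-Rellich inequality would only produce $0\le 0$, and it is exactly the additive $L^2$-remainder in the \emph{improved} Hardy-Rellich inequality that rules out equality and closes the argument uniformly.
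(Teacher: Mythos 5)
Your proof is correct and takes essentially the same route as the paper: both arguments combine the upper bound $u^*\le 1-|x|^{4/3}$ of Lemma \ref{blow} with the improved Hardy--Rellich inequality (whose additive $L^2$-remainder is indeed what handles the borderline case $2\lambda^*=H_N$) to show that the stability form at $u^*$ is bounded below by a positive constant, which is incompatible with $u^*$ being regular. The only cosmetic difference is that the paper proves $\mu_1(u^*)\ge C>0$ directly for every test function and then rules out regularity via the Implicit Function Theorem, whereas you assume regularity first and test against the normalized minimizer $\phi_0$.
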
 
\begin{proof} Let $ \psi \in C_c^\infty(B)$ with $ \int_B \psi^2 =1$.  By Lemma \ref{blow} and the improved Hardy-Rellich inequality (see \cite{GM}), one then has 
\begin{eqnarray*}
\int_B (\Delta \psi)^2 - 2 \lambda^* \int_B \frac{ \psi^2}{(1-u^*)^3} \ge \int_B (\Delta \psi)^2 - H_N \int_B \frac{\psi^2}{|x|^4} \ge C.
\end{eqnarray*} 
It follows that $ \mu_1(u^*)>0$ and $ u^*$ must be singular, since otherwise, one could use the Implicit Function Theorem  to continue the minimal branch beyond $ \lambda^*$. 
\end{proof}  
\noindent We can now show the following result about the extremal solution.
\begin{thm} \label{hjh} The following upper bounds on $\lambda^*$ hold in large dimensions.
\begin{enumerate}
\item  If $N \ge 31$, then $\lambda^* \le 27 \bar{ \lambda}\leq \frac{H_N}{2}$.
\item If $17 \le N \le 30$, then $\lambda^* \le \frac{H_N}{2}$.
\end{enumerate}
The extremal solution is therefore singular for dimension $N\geq 17$.
\end{thm}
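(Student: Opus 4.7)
The plan is to reduce everything to Theorem \ref{kkl}, which yields singularity of $u^*$ for $N\geq 9$ as soon as $\lambda^*\leq H_N/2$; the task is therefore precisely to establish the two quantitative upper bounds on $\lambda^*$. A preliminary polynomial check (equivalent to $768(N-\tfrac 23)(N-\tfrac 83)\leq N^2(N-4)^2$) shows that the inequality $27\bar\lambda\leq H_N/2$ holds exactly for $N\geq 31$, so the second half of (1) is elementary and (1) reduces to producing the bound $\lambda^*\leq 27\bar\lambda$ in that range.

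The heart of (1) is the observation that $1-cr^{4/3}$ formally solves $\Delta^2 u = c^3\bar\lambda/(1-u)^2$, since $\Delta^2(cr^{4/3})=-c\bar\lambda r^{-8/3}$ by the computation behind $\bar\lambda$; the choice $c=3$ produces exactly the target nonlinearity $27\bar\lambda/(1-u)^2$. Because $r^2$ is biharmonic, setting
\[ u_s(r):=1-3r^{4/3}+2r^2 \]
preserves $\Delta^2 u_s = 3\bar\lambda r^{-8/3}$ while enforcing $u_s(1)=u_s'(1)=0$ (so $u_s\in H^2_0(B)$) and keeping $u_s(0)=1$, $0\le u_s\le 1$, $u_s$ radially decreasing on $[0,1]$. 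Since $(1-u_s)^2=r^{8/3}(3-2r^{2/3})^2\leq 9r^{8/3}$, one sees $\Delta^2 u_s\leq 27\bar\lambda/(1-u_s)^2$, so $u_s$ is an $H^2(B)$-weak sub-solution of $(P)_{27\bar\lambda}$. For semi-stability one uses the crude pointwise estimate $1/(1-u_s)^3\leq 1/r^4$ and the Hardy-Rellich inequality: the condition $\int_B(\Delta\phi)^2\geq 54\bar\lambda\int_B\phi^2/(1-u_s)^3$ reduces precisely to $54\bar\lambda\leq H_N$, which is equivalent to $27\bar\lambda\leq H_N/2$ and therefore holds in exactly the same range $N\geq 31$. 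Applying Lemma \ref{poo} with $(\alpha,\beta)=(0,0)$ and $\beta'=0$, any $H^2(B)$-weak super-solution $U$ of $(P)_{27\bar\lambda}$ would dominate this singular semi-stable $u_s$; but if $27\bar\lambda\leq\lambda^*$ held, the minimal (or extremal) solution at $27\bar\lambda$ would be such a $U$, forcing $U(0)\geq u_s(0)=1$, a contradiction. Hence $\lambda^*\leq 27\bar\lambda\leq H_N/2$, settling (1).

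Part (2) is the genuinely subtle one: for $17\leq N\leq 30$ the crude estimate $54\bar\lambda\leq H_N$ fails, so the construction of (1) does not suffice. The plan is to close the gap by exploiting the sharper pointwise identity $1/(1-u_s)^3=1/[r^4(3-2r^{2/3})^3]$, which is strictly smaller than $1/r^4$ except at $r=1$, combined with the improved Hardy-Rellich inequality $\int_B(\Delta\phi)^2 \geq H_N\int_B\phi^2/|x|^4+C\int_B\phi^2$ recorded at the start of this section; one may in parallel enlarge the family of singular sub-solutions by replacing $2r^2$ with $ar^2+br^k$ and optimizing the free parameter so as to decrease the effective weight $1/(1-u_s)^3$ in the bulk while preserving the boundary conditions and the pointwise sub-solution property. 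The main obstacle is that in this dimensional window the target inequality is true only by a thin margin, so the argument has to quantify the improvement provided by the $C\int\phi^2$ term (or equivalently compute the best constant of a weighted Hardy-Rellich inequality adapted to the precise weight $1/[r^4(3-2r^{2/3})^3]$) and balance it against the excess $54\bar\lambda-H_N$ dimension by dimension; this is where I expect the real work to lie, and the range $17\leq N\leq 30$ may have to be treated in a unified but dimension-sensitive computation rather than a single clean estimate.

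Once $\lambda^*\leq H_N/2$ is secured in both ranges, Theorem \ref{kkl} immediately yields that the extremal solution $u^*$ is singular for every $N\geq 17$, completing the proof.
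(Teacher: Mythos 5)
Your part (1) is correct and is essentially the paper's argument: the function $u_s=1-3r^{4/3}+2r^2$ is exactly the paper's $w_2$ (the member $m=2$ of the family $w_m=1-\frac{3m}{3m-4}r^{4/3}+\frac{4}{3m-4}r^m$), the sub-solution and semi-stability verifications are the same, and the contradiction via Lemma \ref{poo} against the minimal solution at a level $\lambda<\lambda^*$ is the same. Part (2), however, is a genuine gap: you have not produced a proof, only a plan, and the plan as stated does not work. The most basic obstruction is that your $u_s$ fails to be a sub-solution of $(P)_{H_N/2}$ in the range $17\le N\le 30$: the sub-solution inequality $\Delta^2 u_s=\frac{3\bar\lambda}{r^{8/3}}\le \frac{H_N}{2\,r^{8/3}(3-2r^{2/3})^2}$ requires $3\bar\lambda(3-2r^{2/3})^2\le \frac{H_N}{2}$, which at $r=0$ is exactly $27\bar\lambda\le\frac{H_N}{2}$ --- the very inequality that fails for $N\le 30$. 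So sharpening only the stability estimate cannot close part (2); the sub-solution property itself breaks near the origin. Your fallback of invoking the $C\int\psi^2$ remainder in the improved Hardy--Rellich inequality is also unquantified: one would need the admissible zeroth-order constant $C$ to dominate a specific $N$-dependent constant coming from $\sup_{r}\bigl(\frac{54\bar\lambda}{(3-2r^{2/3})^3}-H_N\bigr)r^{-4}$-type bounds, and nothing in the statement you are allowed to use gives any control on $C$.

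The paper's resolution is the member $m=3$ of the same family,
\[
w_3:=1-\tfrac{9}{5}r^{4/3}+\tfrac{4}{5}r^{3},\qquad 1-w_3=\tfrac{1}{5}r^{4/3}\bigl(9-4r^{5/3}\bigr),
\]
whose key feature is that $9-4r^{5/3}\ge 5$ on $[0,1]$, so $\frac{1}{(1-w_3)^3}\le \frac{125}{125\,r^4}=\frac{1}{r^4}$ with the factor $125$ cancelling exactly; semi-stability at level $\frac{H_N}{2}$ then follows from the plain Hardy--Rellich inequality with no slack required, in every dimension $N\ge 5$. The entire dimensional restriction $17\le N\le 30$ is then carried by the pointwise sub-solution inequality
\[
\frac{H_N}{2(1-w_3)^2}-\Delta^2 w_3=\frac{25N^2(N-4)^2}{32(9r^{4/3}-4r^3)^2}-\frac{8(N-\tfrac23)(N-\tfrac83)}{5r^{8/3}}-\frac{12}{5}\,\frac{N^2-1}{r}\ \ge\ 0\quad\text{on }(0,1),
\]
which the paper verifies by a computer-assisted (Maple) check for each $N$ in that range; the conclusion then follows from Lemma \ref{poo} and Theorem \ref{kkl} exactly as in part (1). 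Without identifying such a function and verifying this one-variable inequality (or an equivalent quantitative substitute), part (2) of your argument is incomplete.
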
 
\begin{proof} Consider for any $m>0$ the following function:
\begin{equation}
 w_m:=1-3m/(3m-4)r^{4/3}+4 r^m/(3m-4) 
\end{equation}
Assume first that  $N \ge 31$, then $27 \bar{ \lambda} \le \frac{H_N}{2}$. We shall show that $w_2$ is a singular  $H^2(B)-$weak sub-solution of $(P)_{27 \bar \lambda}  $ that is semi-stable.  Indeed,  write 
\[
w_2:=1-|x|^{\frac{4}{3}}-2(|x|^\frac{4}{3}-|x|^2)=\bar{u}-\phi_0,
\]
 where $ \phi_0:=2(|x|^\frac{4}{3}-|x|^2)$, and note that  $w_2\in H_0^2(B)$, $ \frac{1}{1-w_2} \in L^3(B)$, $ 0 \le w_2 \le 1$ in $B$, and 
\[ 
\Delta^2 w_2 \le \frac{ 27 \bar{\lambda}}{(1-w_2)^2} \qquad \hbox{in }B\setminus \{0\}. 
\] 
So $w_2$ is $H^2(B)-$weak sub-solution of $(P)_{27 \bar \lambda} $. Moreover, since  $ 27 \bar{ \lambda} \le \frac{H_N}{2}$, and since $\phi_0\geq 0$, we get that 
\begin{eqnarray*}
54 \bar \lambda \int_B \frac{ \psi^2}{(1-w_2)^3} \le H_N \int_B \frac{ \psi^2}{(|x|^\frac{4}{3}+\phi_0)^3} \le H_N \int_B \frac{\psi^2}{|x|^4} \le \int_B (\Delta \psi)^2
\end{eqnarray*} 
for all $ \psi \in H_0^2(B)$. Hence $w_2$ is also semi-stable. If now $ 27 \bar{ \lambda} <  \lambda^*$, then by Lemma \ref{poo},  $ w_2$ is necessarily below the minimal solution $ u_{ 27 \bar{ \lambda}}$ which contradicts the fact that $ w_2 $ is singular.  Hence $ \lambda^* \leq 27 \bar{ \lambda} \le \frac{H_N}{2}$.

\medskip \noindent Now consider the function  
\[ 
w_3:=1- \frac{9}{5} r^\frac{4}{3} + \frac{4}{5} r^3. 
\]  
We show that it is a singular  $H^2(B)-$weak sub-solution of $ (P)_\frac{H_N}{2}$ that is semi-stable. Indeed, we clearly have that  $ 0 \le w_3 \le 1$ a.e. in $B$, $ w_3 \in H_0^2(B)$ and $ \frac{1}{1-w_3} \in L^3(B)$.   To show the stability condition, we consider  $ \psi \in C_c^\infty(B)$ and write
\begin{eqnarray*}
H_N \int_B \frac{ \psi^2}{(1-w_3)^3} &=& 125 H_N \int_B \frac{ \psi^2}{ (9r^\frac{4}{3}-4r^3)^3} \le  125 H_N \sup_{0<r<1} \frac{1}{(9-4r^{ 3-\frac{4}{3}})^3} \int_B \frac{\psi^2}{r^4} \\
&=& H_N \int_B \frac{ \psi^2}{r^4} \le \int_B (\Delta \psi)^2.
\end{eqnarray*} 
An easy computation shows that 
\begin{eqnarray*}
\frac{H_N}{2(1-w_3)^2} - \Delta^2 w_3 &=& \frac{ 25 H_N}{2 ( 9r^\frac{4}{3}-4r^3)^2} - \frac{ 9 \bar{\lambda}}{5 r^\frac{8}{3}} - \frac{12}{5}\frac{N^2-1}{r}\\
&=& \frac{25 N^2 (N-4)^2 }{32 ( 9 r^\frac{4}{3}-4r^3)^2} - \frac{8 ( N-\frac{2}{3})(N-\frac{8}{3})}{ 5 r^\frac{8}{3}}  - \frac{12}{5}\frac{N^2-1}{r}
\end{eqnarray*} 
and by using Maple,  one can verify that this final quantity is nonnegative on $ (0,1)$, whenever  $ 17 \le N \le 30$, hence $ w_3 $ is a subsolution of $ (P)_\frac{H_N}{2}$. If now, $\frac{H_N}{2}<\lambda^*$, then Lemma \ref{poo} would imply that the minimal solution $ u_\lambda$ is larger than  $w_3$  and hence is singular for $ \frac{H_N}{2} < \lambda < \lambda^*$, which is a contradiction.
\end{proof}

\begin{remark} \rm We believe that the extremal solution is singular for all $N\geq 9$ and for that, one needs to construct again for the remaining cases $9\leq N\leq 16$ (and $\frac{H_N}{2} < \lambda^*$),  a singular $H^2(B)-$weak sub-solution of $ (P)_\frac{H_N}{2}$ that is semi-stable. However, one can show that (at least for $N=9$) such a sub-solution cannot be obtained by simply perturbing $\bar u$ with a function of the form  $\phi_0=\frac{4}{3} \beta r^\alpha(1-r^\beta)$.\\
\noindent The construction of such a sub-solution for the remaining cases, i.e. when $9\leq N\leq 16$ and $\frac{H_N}{2} < \lambda^*$, will therefore very likely require a computer assisted proof, that we leave open to the interested reader. 
\end{remark}

\end{document}